\def\rr{{\mathbb R}}
\def\rn{{{\rr}^n}}
\def\nn{{\mathbb N}}
\def\zz{{\mathbb Z}}
\def\cs{{\mathcal S}}
\def\fz{\infty}
\def\az{\alpha}
\def\loc{{\mathop\mathrm{\,loc\,}}}
\def\BMO{{\mathop\mathrm{\,BMO\,}}}
\def\ez{\epsilon}
\def\kz{{\kappa}}
\def\ls{\lesssim}
\def\laz{\langle}
\def\raz{\rangle}
\def\r{\right}
\def\lf{\left}
\def\XXint#1#2#3{{\setbox0=\text{$#1{#2#3}{\int}$ }
\vcenter{\text{$#2#3$ }}\kern-.6\wd0}}
      \newtheorem{theorem}{Theorem}[section]
      \newtheorem{remark}[theorem]{Remark}
      \newtheorem{proposition}[theorem]{Proposition}
      \newcommand{\lt}[1]{[ {#1}] \lower.3ex\text{$_{t}$}}
\begin{document}

\title[Fractional Differential Couples by Sharp Inequalities and Duality Equations]
{Fractional Differential Couples\\  by Sharp Inequalities and Duality Equations}

\author{Liguang Liu}
\address{School of Mathematics,
Renmin University of China,
Beijing 100872, China}
\email{liuliguang@ruc.edu.cn}

\author{Jie Xiao}
\address{Department of Mathematics and Statistics,
Memorial University, St. John's, NL A1C 5S7, Canada}
\email{jxiao@math.mun.ca}

\thanks{
LL was supported by the National Natural Science Foundation of China
(\# 11771446);
JX was supported by NSERC of Canada (\# 202979463102000).
}

\subjclass[2010]{{31B15, 42B30, 46E35}}

\date{\today}

\keywords{}


\begin{abstract}
This paper presents a non-trivial two-fold study of the fractional differential couples - derivatives ($\nabla^{0<s<1}_+=(-\Delta)^\frac{s}{2}$) and gradients ($\nabla^{0<s<1}_-=\nabla (-\Delta)^\frac{s-1}{2}$) of basic importance in the theory of fractional advection-dispersion equations: one is to discover the sharp Hardy-Rellich ($sp<p<n$) $|$ Adams-Moser ($sp=n$) $|$ Morrey-Sobolev ($sp>n$) inequalities for $\nabla^{0<s<1}_\pm$; the other is to handle the distributional solutions $u$ of the duality equations $[\nabla^{0<s<1}_\pm]^\ast u=\mu$ (a nonnegative Radon measure) and $[\nabla^{0<s<1}_\pm]^\ast u=f$ (a Morrey function).
\end{abstract}

\maketitle

\tableofcontents

\arraycolsep=1pt
\numberwithin{equation}{section}

\section{Introduction}\label{s0}

In his celebrated 1988 paper \cite{Adams88}, Adams extends the Moser inequality in \cite{Mos} from the first order to the higher order gradients in the Euclidean space $\rr^{n\ge 2}$ - given the gradient $$\nabla=(\partial_{x_1},...,\partial_{x_n})$$ and the Laplacian
$$
\Delta=\sum_{j=1}^n\partial^2_{x_j}
$$
as well as
$$
\nabla^{m}=\begin{cases}(-1)^\frac{m}{2}(-\Delta)^{\frac m2}\ \ &\text{for}\ \ m\ \ \text{even}\\
(-1)^\frac{m-1}{2}\nabla(-\Delta)^{\frac {m-1}2}\ \ &\text{for}\ \ m\ \ \text{odd}
\end{cases}
\ \ \&\ \ 0<m<n,
$$
there is a constant {}{$c_{0,m,n}$} such that
\begin{equation}
\label{ada10}
\int_{\Omega}\exp\Bigg(\frac{\beta|u(x)|}{\|\nabla^mu\|_{L^\frac{n}{m}}}\Bigg)^\frac{n}{n-m}\,\frac{dx}{|\Omega|}\le c_{0,m,n}\ \ \forall\ \ u\in C^m_c(\Omega)
\end{equation}
holds, where:
\begin{itemize}
\item[$\rhd$]
$$
0\le\beta\le\beta_{0,m,n}=\begin{cases}
\lf(\frac n{\omega_{n-1}} \r)^{\frac {n-m}n} \frac{\pi^{\frac n2} 2^m \Gamma(\frac{m}{2})}{\Gamma(\frac{n-m}{2})}\ \ &\text{for}\ \ m \ \ \text{even}\\
\lf(\frac n{\omega_{n-1}} \r)^{\frac {n-m}n} \frac{\pi^{\frac n2} 2^m \Gamma(\frac{m+1}{2})}{\Gamma(\frac{n+1-m}{2})}\ \ &\text{for}\ \ m \ \ \text{odd}
\end{cases}
\ \ \&\ \ 0<m<n;
$$
\item[$\rhd$]
$\Omega$ is a subdomain of $\mathbb R^n$ with finite $n$-measure $|\Omega|$ and its associate space $C_c^m(\Omega)$ stands for all $C^m$-functions supported in $\Omega$;

\item[$\rhd$] $\Gamma(\cdot)$ is the standard gamma function and induces
${\omega_{n-1}=}\frac{2\pi^\frac{n}{2}}{\Gamma(\frac{n}{2})}$ - the area of the unit sphere $\mathbb S^{n-1}$ of $\rn$;

\item[$\rhd$] \eqref{ada10} is established through the Adams-Riesz potential inequality (just under \cite[(23)]{Adams88})
\begin{equation}
\label{aR}
|u(x)|\le\frac{\Big(\frac{n}{\omega_{n-1}}\Big)^\frac{n-m}{n}}{\beta_{0,m,n}}
\int_{\rn}{|y-x|^{m-n}|\nabla^m u(y)|}\,dy\ \ \ \ \forall\ \ u\in C_c^\infty.
\end{equation}
Moreover, if $\beta>\beta_{0,m,n}$ then there is $u\in C^m_c(\Omega)$ such that the integral in \eqref{ada10} can be made as large as desired - in other words - $\beta_{0,m,n}$ is sharp.
\end{itemize}

Upon examining $\|\nabla^mu\|_{L^\frac{n}{m}}$ in \eqref{ada10}, we are automatically suggested to consider a variant of \eqref{ada10} for
$$
\|\nabla^mu\|_{L^{1<p<\frac{n}{m}}}\ \ \text{or}\ \ \|\nabla^mu\|_{L^{\infty>p>\frac{n}{m}}}.
$$
\begin{itemize}
\item[$\rhd$] For the former, we use the $m$-form of \cite[Corollary 1 \& Theorem 4 (16)]{Be} to derive the sharp $m$-order Hardy-Rellich inequality
\begin{equation}
\label{eHR}\left(\int_{\rn}\bigg(\frac{|u(x)|}{|x|^m}\bigg)^p\,dx\right)^\frac1p\le c_{mp<n}\|\nabla^m u\|_{L^p}\ \ \forall\ \ u\in C^\infty_c,
\end{equation}
where
$$
c_{mp<n}=\begin{cases}
\frac{2^{-m}\Gamma\big(\frac{n}{2p}-\frac{m}{2}\big)\Gamma\big(\frac{n(p-1)}{2p}\big)}{\Gamma\big(\frac{n(p-1)}{2p}+\frac{m}{2}\big)\Gamma\big(\frac{n}{2p}\big)}\ \ &\text{for}\ m\ \text{even}\\
\Big(\frac{2^{1-m}p}{n-p}\Big)\left(\frac{\Gamma(\frac{n}{2p}-\frac{m}{2})\Gamma(\frac{n(p-1)}{2p}+\frac12)}{\Gamma(\frac{n(p-1)}{2p}+\frac{m}{2})\Gamma(\frac{n}{2p}-\frac12)}\right) \ \ &\text{for}\ m\ \text{odd}
\end{cases}\ \ \&\ \ 0<m<n.
$$
Of course, the case $m=1$ of \eqref{eHR} is the classical sharp Hardy inequality (cf. \cite{HLP}).

\item[$\rhd$] For the latter, we use the $m$-form of Theorem \ref{t21}(iii) (viewed as a sharp Morrey-Riesz inequality) and \eqref{aR} to discover the sharp $m$-order Morrey-Sobolev inequality
\begin{equation}
\label{eMS}
\|u\|_{L^\infty}\le \frac{\left(\frac{n(p-1)}{mp-n}\right)^\frac{p-1}{p}}{\beta_{0,m,n}}|\Omega|^\frac{mp-n}{pn}\|\nabla^m u\|_{L^p}\ \ \forall\ \ u\in C_c^m(\Omega).
\end{equation}
\end{itemize}
In particular, the case $m=1$ of \eqref{eMS} is the classical sharp Morrey-Sobolev inequality (cf. {\cite[Theorem~2.E.]{Ta}}).

Clearly, \eqref{ada10}, \eqref{eHR} and \eqref{eMS} give a complete structure on utilizing the higher derivatives and gradients to sharply dominate the size of a derivative/gradient-free function. However, upon recognizing the fractional vector calculus considerably used in both Herbst's study of the Klein-Gordon equation for a Coulomb potential \cite{Her} and Meerschaert-Mortensen-Wheatcraft's investigation of the particle mass density $u(x,t)$ of a contaminant in some fluid at a point $x\in\mathbb R^n$ at time $t>0$ which solves the fractional advection-dispersion equation (with a constant average velocity $\vec{v}$ of contaminant particles and a positive constant $\kappa$)
\begin{align*}
\partial_t u(x,t)&=-\vec{v}\cdot\nabla u(x,t)-\kappa (-\Delta)^\frac{1+s}{2} u(x,t)\\
&=-\vec{v}\cdot\nabla \rho(x,t)+\kappa\text{div}^s\big(\nabla u(x,t)\big)\\
&=-\vec{v}\cdot\nabla u(x,t)+\kappa \text{div}\big(\nabla^s_- u(x,t)\big)
\end{align*}
combining a fractional Fick's law for flux with a classic mass balance - and reversely- a fractional mass balance with a classic Fickian flux  \cite{MMW},
in the forthcoming sections we are driven to work out versions of \eqref{ada10}, \eqref{eHR} and \eqref{eMS} for the fractional differential couples - derivatives and gradients: $$\{\nabla^{0<s<1}_+,\nabla^{0<s<1}_-\}\ \ \text{corresponding naturally to}\ \ \{\nabla^{m=\text{even}}, \nabla^{m=\text{odd}}\},
$$
and their essential applications in the study of the distributional solutions to some fractional partial differential equations of dual character. More precisely,
\begin{itemize}
\item[$\rhd$] \S \ref{s1} collects some fundamental facts on $$\nabla^{0<s<1}_\pm\ \ \&\ \  [\nabla^{0<s<1}_\pm]^\ast$$ through the Stein-Weiss-Hardy inequalities and the  Fefferman-Stein type decompositions (cf. \cite{FS, BB, LXI}).

\item[$\rhd$] \S \ref{s2} utilizes Theorem \ref{t21} - an sharp embedding principle for the Riesz potentials to discover the fractional extensions of \eqref{ada10}, \eqref{eHR} and \eqref{eMS} - Theorem \ref{t31}.

\item[$\rhd$] \S\ref{s3} discusses the fractional Hardy-Sobolev spaces $$H^{0<s<1,1<p<\infty}\ \ \&\ \   H^{0<s<1,1<p<\infty}_\pm$$ and their dualities generated by $\nabla^{0<s<1}_\pm$ - Theorems \ref{t3.1}-\ref{t3.2}.
\item[$\rhd$] \S \ref{s4} studies the distributional solutions of the duality equations $$[\nabla^{0<s<1}_\pm]^\ast u=\mu
$$ for a nonnegative Radon measure $\mu$ and their absolutely continuous forms  $$[\nabla^{0<s<1}_\pm]^\ast u=
f
$$
under the hypothesis that $f$ is in the Morrey space
$\mathrm{L}^{1\le p<{\kappa}/{s},0<\kappa\le n}$ (cf. \cite{ADuke})
- Theorems \ref{t4.1}-\ref{t4.3}.
\end{itemize}

\noindent{\it Notation}. In what follows,
$U\lesssim V$ (resp.\, $U\gtrsim V$) means $U\le cV$ (resp. $U\ge cV$) for a positive constant $c$ and $U\approx V$ amounts to $U\gtrsim V\gtrsim U$.

\section{Fractional differential couples $\nabla^{0<s<1}_\pm$ and their dualities $[\nabla^{0<s<1}_\pm]^\ast$}\label{s1}

\subsection{Fractional differential couples $\nabla^{0<s<1}_\pm$}\label{s11}
For $(n,p)\in\mathbb N\times[1,\infty)$ let $H^p$ be the real Hardy space of all {}{functions $u$} in the Lebesgue space $L^p$ on the Euclidean space $\rn$ with
$$\|u\|_{H^p}=\|u\|_{L^p}+\|\vec{R}u\|_{L^p}<\infty,$$
where $\vec{R}={(R_1,\dots, R_n)}$  is the vector-valued Riesz transform on $\rn$, with
$$
\vec{R}u=(R_1u,\dots, R_nu)\ \ \&\ \
R_ju(x)=\left(\frac{\Gamma(\frac{n+1}{2})}{\pi^{\frac{n+1}{2}}}\right)\;\text{p.v.}\int_{\mathbb R^n}\frac{x_j-y_j}{|x-y|^{n+1}}u(y)\,dy
\ \ \textup{a.\,e.}\ x\in\rn.$$
Also, for a vector-valued function
$$\vec{f}=(f_1,\dots, f_n)$$
let
$$
\|{\vec{f}}\|_{L^p}={ \big\||\vec f|\big\|_{L^p}\approx }  \sum_{j=1}^n \|f_j\|_{L^p}.
$$
Note that
$H^p$ coincides with the classical Lebesgue space $L^p$ whenever $p\in(1,\infty)$ and the $(0,1)\ni s$-th order Riesz singular integral operator $I_s$ acting on a suitable function $u$ is defined by
$$
I_su(x)=\lf(\frac{\Gamma(\frac{n-s}{2})}{\pi^\frac{n}{2}2^s\Gamma(\frac{s}{2})}\r)\int_{\mathbb R^n}|x-y|^{s-n} u(y)\,dy
\qquad\textup{a.\,e.}\ x\in\rn.
$$
We refer the reader to Stein's seminal texts \cite{St, St2} for more about these basic notions.
The Stein-Weiss-Hardy inequality (cf. \cite{SW} for $p>1$ and \eqref{eSWH} in \S\ref{s4} for $p=1$)
states that under
$$
0<s<1\le p< \frac ns
$$
we have
\begin{equation}
\label{e1}
\left(\int_{\rn}\big(|x|^{-s}|I_s u(x)|\big)^p\,dx\right)^\frac1p\lesssim \|u\|_{L^p}+\|\vec{R}u\|_{L^p}{}{\approx\|u\|_{H^p}} \qquad \ \forall\ \ u\in  H^{p}.
\end{equation}

Let $C_c^\infty$ be the collection of all infinitely differentiable functions compactly supported in $\rn$.
Note that $C_c^\infty\cap H^p$ is dense in $H^p$ for any $p\in[1,\infty)$.
For any $u\in C_c^\infty$ let
\begin{align}\label{eq-def0}
(-\Delta)^\frac{s}{2}u(x)=\begin{cases}I_{-s}u(x)=c_{n,s}\int_{\mathbb R^n}\frac{u(x+y)}{|y|^{n+s}}\,dy\ \ &\text{as}\ \ s\in (-1,0)\\
u(x)  \ \ &\text{as}\ \ s=0\\
c_{n,s,+}\; \text{p.v.}\; \int_{\mathbb R^n}\frac{u(x+y)-u(x)}{|y|^{n+s}}\,dy\ \ &\text{as}\ \ s\in (0,1)
\end{cases}
\end{align}
and
\begin{align}\label{eq-def00}
\nabla^su(x)=\Bigg(\frac{\partial^s u}{\partial x_j^s}\Bigg)_{j=1}^n=\vec{R}(-\Delta)^\frac{s}{2}u(x)=c_{n,s,-}\int_{\mathbb R^n}\frac{y\big(u(x)-u(x-y)\big)}{|y|^{n+1+s}}\,dy,
\end{align}
where (cf. \cite[Definition~1.1, Lemma~1.4]{Bucur} for $c_{n,s,+}$ {and \cite{LXI} for $c_{n,s,-}$})
$$
\begin{cases}
c_{n,s}=\frac{\Gamma(\frac{n-s}{2})}{\pi^\frac{n}{2}2^s\Gamma(\frac{s}{2})}\\
c_{n,s,+}=\frac{s2^{s-1}\Gamma\big(\frac{n+s}{2}\big)}{\pi^{\frac{n}{2}}\Gamma\big(1-\frac{s}{2}\big)}\\
c_{n,s,-}=\frac{2^{s}\Gamma\big(\frac{n+s+1}{2}\big)}{\pi^{\frac n2}\Gamma\big(\frac{1-s}{2}\big)}.
\end{cases}
$$
Especially, if $0<s<n=1$ then there are two $s$-dependent constants $c_{\pm}$ to make the following Liouville fractional derivative formulae (cf. \cite{SS2}):
$$
\begin{cases}
(-\Delta)^\frac{s}{2}u(x)=c_+\bigg(\frac{d^s}{dx^s_+}+\frac{d^s}{dx^s_-}\bigg)u(x)\\
\nabla^su(x)=c_-\bigg(\frac{d^s}{dx^s_+}-\frac{d^s}{dx^s_-}\bigg)u(x)\\
\frac{d^s}{dx^s_\pm}u(x)=\frac{s}{\Gamma(1-s)}\int_{\pm\infty}^0\frac{t(u(x+t)-u(x))}{|t|^{2+s}}\,dt.
\end{cases}
$$
Hence it is natural and reasonable to adopt the notations
\begin{align}\label{eq-def}
\nabla^s_+u=(-\Delta)^\frac{s}{2}u\qquad \&\qquad \nabla^s_-u=\nabla^s u=\vec{R}(-\Delta)^\frac{s}{2}u.
\end{align}
The operators $\nabla^s_+$ and  $\nabla^s_-$ can be viewed as the fractional derivative and the fractional gradient due to
$$
\text{id}=-\sum_{j=1}^n R_j^2=-\vec{R}\cdot\vec{R}.
$$
Accordingly, for any $s\in(0,1)$,
the  Stein-Weiss-Hardy inequality \eqref{e1} (cf. \cite{SSS17}) amounts to
\begin{equation}
\label{e3}
\left(\int_{\rn}\big(|x|^{-s}|u(x)|\big)^p\,dx\right)^\frac1p
\lesssim \|\nabla^s_+u\|_{L^p}+\|\nabla^s_{-}u\|_{L^p} \qquad\forall\ \ u\in I_s(C_c^\infty\cap H^p).
\end{equation}
Here it is worth pointing out the following fundamentals:
\begin{itemize}

	\item[$\rhd$] If $0<s<1<p<\frac{n}{s}$, then the right-hand-side  of \eqref{e3} can be replaced by $\|\nabla^s_\pm u\|_{L^p}$.
	More precisely, on the one hand, the boundedness of $\vec R$ on $L^{p>1}$ and \eqref{e3} give (cf. \cite[Lemma 2.4]{SS1})
	\begin{equation*}
	\left(\int_{\rn}\big(|x|^{-s}|u(x)|\big)^p\,dx\right)^\frac1p\lesssim \|\nabla^s_{+}u\|_{L^{p}}\qquad\forall\ \  u\in I_s(C_c^\infty\cap H^p).
	\end{equation*}
	One the other hand, \cite[Theorems 1.8-1.9]{SS1} derives
	\begin{equation*}
	\left(\int_{\rn}\big(|x|^{-s}|u(x)|\big)^p\,dx\right)^\frac1p\lesssim \|\nabla^s_{-}u\|_{L^{p}}\qquad\forall\ \  u\in I_s(C_c^\infty\cap H^p).
	\end{equation*}
	
	\item[$\rhd$] If $0<s<p=1\le n$, then according to Spector's \cite[Theorem 1.4]{Sp} the right-hand-side of \eqref{e3} except $n=1$ (cf. \eqref{e1H}) can be replaced by $\|\nabla^s_{-}u\|_{L^{1}}$ - i.e. -
	\begin{equation*}
	\int_{\mathbb R^{n}}|x|^{-s}|u(x)|\,dx\lesssim \|\nabla^s_{-}u\|_{L^{1}}\ \ \text{under}\ \ n\ge 2\quad\forall\ \  u\in I_s(C_c^\infty\cap H^1).
	\end{equation*}
	which may be viewed as a rough extension of Shieh-Spector's \cite[Theorem 1.2]{SS2} and
	the classic sharp Hardy's inequality (cf. \cite{FrS}) under $n\ge 2$:
	\begin{equation*}
	\begin{cases}\int_{\rn}|x|^{-1}|u(x)|\,dx\le (n-1)^{-1}\|\nabla u\|_{L^1}&\qquad\forall\ \  u\in {C_c^\infty}\\
	\int_{\rn}|x|^{-1}|I_{1-s}u(x)|\,dx\le (n-1)^{-1}\|\nabla^s_- u\|_{L^1}&\qquad\forall\ \  u\in I_{1-s}(C_c^\infty).
	\end{cases}
	\end{equation*}
However, the right-hand-side of \eqref{e3} cannot be replaced by $\|\nabla^s_+u\|_{L^1}$ (cf. \cite[p.119]{St}, \cite[Section~3.3]{SSS17} \& {\cite[Section~1.1]{SS2})}.

\end{itemize}

\subsection{Dual fractional differential couples $[\nabla^{s}_\pm]^\ast$}\label{s12}
Suppose that $C^\infty$ is the space of all infinitely differentiable functions on $\rn$. Denote by $\cs$ the Schwartz class
on $\rn$ consisting of all functions $f$ in $C^\infty$ such that
$$
\rho_{N,\az}(f)=\sup_{x\in\rn}(1+|x|^N)|D^\az f(x)|<\infty\ \ \text{holds for}\ \
\begin{cases}
N\in{\zz_+}=\nn\cup\{0\}\\
\az=(\az_1,\dots,\az_n){\in\zz_+^n}\\
 D^\az=\partial_{x_1}^{\az_1}\cdots\partial_{x_n}^{\az_n}.
 \end{cases}
 $$
Also, write $\cs'$ for the Schwartz tempered distribution space - the dual of $\cs$ endowed with the weak-$\ast$ topology.
According to \cite{Sil, LXI}, given $s\in(0,1)$, if
we let
$$\cs_s=\lf\{f\in C^\infty:\  \rho_{n+s,\az}(f)=\sup_{x\in\rn}(1+|x|^{n+s})|D^\az f(x)|<\infty\;\ \forall\ \alpha\in\zz_+^n\r\}$$
and $\cs_s'$ be the dual space of $\cs_s$ (i.\,e., the space of all continuous linear functionals on $\cs_s$),
then for any
$$u\in\cs_s'\subseteq\cs'$$
we can define below $\nabla^s_\pm u$  as a distribution in $\cs'$:
\begin{align}\label{eq-def1}
\begin{cases}
\laz \nabla^s_+ u, \phi\raz =\laz u, \nabla^s_+\phi\raz \\
\nabla^s_-=(\nabla^s_1,\dots, \nabla^s_n)\\
\laz \nabla^s_j u, \phi\raz =-\laz u, \nabla^s_j\phi\raz \ \ \forall\ \ j\in\{1,...,n\}
\end{cases}
\ \  \forall \  \ \phi\in\cs,
\end{align}
{where the action of $\nabla^s_\pm$ on any function $\phi\in\cs$ is determined by the Fourier transform
$$\hat \phi(\xi)=\int_\rn \phi(x)e^{-2\pi i x\cdot\xi}\,dx \ \  \forall \  \  \xi\in\rn$$
according to
\begin{align}\label{eq-def2}
\begin{cases}
(\nabla^s_+\phi)^\wedge(\xi)=(2\pi|\xi|)^s \hat\phi(\xi)\\
(\nabla^s_j\phi)^\wedge(\xi)=(-2\pi i \xi_j)(2\pi|\xi|)^{s-1} \hat\phi(\xi)
\end{cases}
\ \  \forall \  \  \xi\in\rn.
\end{align}
If $\phi\in C_c^\infty$, then \eqref{eq-def2} goes back to  \eqref{eq-def0}-\eqref{eq-def00}-\eqref{eq-def} (cf. \cite{Sil, Bucur, LXI}). Moreover, the
above equalities in \eqref{eq-def1}} are well defined because  $\nabla^s_+$ and $\nabla^s_j$ send $\cs$ to $\cs_s$
(cf. \cite{Sil, Bucur} for $\nabla^s_+$
and \cite[Lemma~2.6]{LXI} for $\nabla^s_j$).

Based on the foregoing discussion, we may describe the dual/adjoint operators of $\nabla^s_{\pm}$ and one of their most important consequences.
\begin{itemize}
		\item[$\rhd$] The adjoint operator $\big[(-\Delta)^\frac{s}{2}\big]^\ast$ of $(-\Delta)^\frac{s}{2}$ is itself, namely, $$[\nabla^s_+]^\ast=(-\Delta)^\frac{s}{2},$$
	which can be understood in the sense of
\begin{align*}
	\laz [\nabla^s_+]^\ast f,\,\phi\raz=\laz f,\,\nabla^s_+ \phi \raz =\laz \nabla^s_+ f,\,\phi\raz
	\qquad \ \forall\, (f,\phi)\in\cs_s'\times\cs.
	\end{align*}
	This is reasonable, because for nice function pair $(f,\phi)\in (C_c^\infty)^2$ we have (cf. \cite{S})
$$\laz [\nabla^s_+]^\ast f,\,\phi\raz=\int_{\mathbb R^n}\big((-\Delta)^\frac{s}{2}f(x)\big)\phi(x)\,dx=\int_{\mathbb R^n}f(x)\big((-\Delta)^\frac{s}{2}\phi(x)\big)\,dx=\laz f,\,\nabla^s_+ \phi \raz$$
	and
	$$
	(-\Delta)^\frac{s}{2}\big((-\Delta)^\frac{s}{2}u\big)=(-\Delta)^{s}u\qquad \forall\ \ u\in C_c^\infty.
	$$
	
	\item[$\rhd$] Upon setting
	\begin{align*}
	\text{div}^s\vec{g}=(-\Delta)^{\frac s2}\vec{R}\cdot\vec{g},
	\end{align*}
then $-\text{div}^s$ exists as the adjoint operator $\big[\nabla^s_{-}\big]^\ast$  of $\nabla^s_{-}$ - in short - $$[\nabla^s_-]^\ast=-\text{div}^s.$$
Note that (cf. \cite[Theorem 1.3]{SS1})
	$$
	-\text{div}^s(\nabla^s_- u)=(-\Delta)^su \qquad \forall\ \  u\in C_c^\infty
	$$
	and (cf. \cite[Lemma 2.5]{CS})
	\begin{equation*}
	\int_{\mathbb R^n}f(x)(-\text{div}^s\vec{g})(x)\,dx=\int_{\mathbb R^n}\vec{g}(x)\cdot \nabla^s_- f(x)\,dx\qquad \forall\ \ (f,\vec{g})\in C_c^\infty\times (C_c^\infty)^n.
	\end{equation*}
	
\item[$\rhd$] Recall that $\mathrm{BMO}$ stands for the John-Nirenberg class of all locally integrable functions $f$ on $\rn$ with bounded mean oscillation (cf. \cite{JN})
$$
\|f\|_\mathrm{BMO}=\sup_{B\subseteq\rn} \frac1{{|B|}}\int_B |f(x)-f_B|\, dx<\infty
$$
where the supremum is taken over all Euclidean balls $B\subseteq\rn$ with $$
|B|=\int_{B}dx\ \ \&\ \
f_B=\frac1{{|B|}}\int_B f(x)\,dx.
$$
Of remarkable interest is that the
Fefferman-Stein decomposition (cf. \cite{FS, U})
$$
[H^1]^\ast=\BMO=
L^\infty+\vec{R}\cdot\big(L^\infty\big)^{n}
$$
can be written as the following form (cf. \cite[Theorem 4.4]{LXI})
 $$
 [H^1]^\ast=\BMO=
 L^\infty+I_s\big([\mathring{H}^{s,1}_{-}]^\ast\big),
 $$
 where
 $$
 \begin{cases}
 \mathring{H}^{s,1}_-
 =\text{closure of}\ \cs\ \text{in}\ H^{s,1}_-\ \text{under}\  [\cdot]_{H^{s,1}_-}\\
 H^{s,1}_\pm=\lf\{u\in\cs_s':\ [u]_{H^{s,1}_\pm}=\|\nabla^s_\pm u\|_{L^1}<\infty\r\}.
 \end{cases}
 $$
Note that if $W^{1,n}$ stands for the Sobolev space of all locally integrable functions $f$ with $\|\nabla f\|_{L^n}<\infty$ then there are (cf. \cite[Theorem 4.4]{LXI})
$$
W^{1,n}\subset I_s\big([\mathring{H}^{s,1}_{-}]^\ast\big)=\vec{R}\cdot\big(L^\infty\big)^{n}\subset\BMO\ \ \text{under}\ \ n\ge 2
$$
and (cf. \cite[theorem 1]{BB})
$$
W^{1,n}=\vec{R}\cdot\big(L^\infty\cap{W}^{1,n}\big)^n\ \ \text{under}\ \ n\ge 2.
$$
So, $I_s\big([\mathring{H}^{s,1}_{-}]^\ast\big)$ exists as a solution to the Bourgain-Brazis question (cf. \cite[p.396]{BB}) - {\it What are the function spaces $X$, $W^{1,n}\subseteq X\subseteq\BMO$, such that every $F\in X$ has a decomposition $F=\sum_{j=1}^n R_jY_j$ where $Y_j\in L^\infty$?}.
\end{itemize}

\section{Sharp fractional differential-integral inequalities}\label{s2}
\subsection{Optimal control for Riesz's operator $\mathrm{I}_{0<\alpha<n}$}\label{s2.1}

The following is of independent interest.

\begin{theorem}
	\label{t21}
	Let $$
	\begin{cases}
(p,\alpha)\in (1,\infty)\times(0,n)\\
	 I_\alpha=\left(\frac{\Gamma(\frac{n-\alpha}{2})}{2^\alpha\pi^\frac{n}{2}\Gamma(\frac{\alpha}{2})}\right)\mathrm{I}_\alpha{=c_{n,\alpha}\mathrm{I}_\alpha}\\
	\mathrm{I}_\alpha f=\int_{\rn}|x-y|^{\alpha-n}f(y)\,dy.
	\end{cases}
	$$
Then the following assertions are true.
	\begin{itemize}
		\item[\rm (i)] If $\alpha p<{n}$, then
		$$
		{\sup_{0\not=f\in L^p}}\frac{\left(\int_{\rn}\big(|x|^{-\alpha}|\mathrm{I}_\alpha f(x)|\big)^p\,dx\right)^\frac1p}{\|f\|_{L^p}}= c_{\az p<n}=\frac{2^{\frac{\alpha(p-1)}{p}}\pi^\frac{n}{2}\Gamma\big(\frac{\alpha}{2}\big)\Gamma\big(\frac{n}{2p}-\frac{\alpha}{2}\big)\Gamma\big(\frac{n(p-1)}{2p}\big)}{\Gamma\big(\frac{n-\alpha}{2}\big)\Gamma\Big(\frac{n(p-1)}{2p}+\frac{\alpha}{2}\Big)\Gamma\big(\frac{n}{2p}\big)}.
		$$

			\item[\rm (ii)] If $\alpha p=n$, $\Omega\subseteq\rn$ is a domain with volume $|\Omega|<\infty$ and $L^p_c(\Omega)$  stands for the class of all $f\in L^p$ with support contained in $\Omega$, then there is a constant
$c_{\az p=n}$ depending only on $\alpha$ and $n$ such that
			\begin{equation*}\label{ada}
			\sup_{f\in L_c^{p=\frac{n}{\alpha}}(\Omega)}\int_{\Omega}\exp\left(\beta\Bigg|\frac{\mathrm{I}_\alpha f(x)}{\|f\|_{L^{p=\frac{n}{\alpha}}}}\Bigg|^\frac{n}{n-\alpha}\right)\,
\frac{dx}{|\Omega|}\le  c_{\az p=n}\ \ \forall\ \ 0\le\beta\le\frac{n}{\omega_{n-1}}.
			\end{equation*}
		Here $\frac{n}{\omega_{n-1}}$ is sharp in the sense that if $\Omega$ is a Euclidean ball and $\beta>\frac{n}{\omega_{n-1}}$ then the last integral inequality cannot hold without forcing $c_{\alpha p=n}$ to depend only on $\alpha$ and $n$.
		
\item[\rm (iii)] If $\alpha p>{n}$ and $\Omega\subseteq\rn$ is a domain with volume $|\Omega|<\infty$, then
			$$
			\sup_{f\in L^p_c(\Omega)}\frac{\|\mathrm{I}_\alpha f\|_{L^\infty}}{\|f\|_{L^p}|\Omega|^{\frac{\alpha p-n}{pn}}}\le c_{\az p>n}=\left(\frac{\omega_{n-1}}{n}\right)^\frac{n-\alpha}{n}\left(\frac{n(p-1)}{\alpha p-n}\right)^{\frac{p-1}{p}}.
			$$
			Moreover, the constant $c_{\alpha p>n}$
			is sharp in the sense that if $\Omega$ is a Euclidean ball then
			$$
		\sup_{f\in L^p_c(\Omega)}\frac{\|\mathrm{I}_\alpha f\|_{L^\infty}}{|\Omega|^{\frac{\alpha p-n}{pn}} \|f\|_{L^p}}=c_{\alpha p>n}.
			$$
		\end{itemize}
	
\end{theorem}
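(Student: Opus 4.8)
In each of the three regimes the first reduction is the same: since the kernel $|x-y|^{\alpha-n}$ of $\mathrm{I}_\alpha$ is nonnegative, $|\mathrm{I}_\alpha f|\le\mathrm{I}_\alpha|f|$, so one may assume $f\ge 0$; and since both $\mathrm{I}_\alpha$ and multiplication by $|x|^{-\alpha}$ respond to dilations in a controlled way, the extremal objects will be the scale-invariant powers $|y|^{-\sigma}$, truncated whenever they fall outside $L^p$. For (i) I would set $Tf(x)=|x|^{-\alpha}\mathrm{I}_\alpha f(x)$ and view it as a positive integral operator with kernel $K(x,y)=|x|^{-\alpha}|x-y|^{\alpha-n}$, which is jointly homogeneous of degree $-n$ and rotation invariant, and then apply the $L^p$ Schur test with the power weight $h(x)=|x|^{-n/(pp')}$. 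Both Schur conditions, $\int_\rn K(x,y)h(y)^{p'}\,dy\le A\,h(x)^{p'}$ and $\int_\rn K(x,y)h(x)^{p}\,dx\le A\,h(y)^{p}$, reduce to evaluating $\int_\rn|x-y|^{a-n}|y|^{b-n}\,dy$ for suitable exponents with $a+b<n$, which the Riesz composition identity $I_aI_b=I_{a+b}$ for the normalized potentials $I_\gamma=c_{n,\gamma}\mathrm{I}_\gamma$ supplies in closed form as $\int_\rn|x-y|^{a-n}|y|^{b-n}\,dy=\frac{\pi^{n/2}\Gamma(\frac a2)\Gamma(\frac b2)\Gamma(\frac{n-a-b}{2})}{\Gamma(\frac{a+b}{2})\Gamma(\frac{n-a}{2})\Gamma(\frac{n-b}{2})}\,|x|^{a+b-n}$; the choices $b=\frac n{p'}$ in the first condition and $b=\frac np-\alpha$ in the second return the same value $A$, and after clearing the normalization $c_{n,\alpha}$ this yields $\|T\|_{L^p\to L^p}\le c_{\alpha p<n}$. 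The matching lower bound comes from testing against $f_\varepsilon(y)=|y|^{-n/p}\mathbf{1}_{\{\varepsilon<|y|<1/\varepsilon\}}$: one has $\|f_\varepsilon\|_{L^p}^p\approx\log(1/\varepsilon)\to\infty$, while on a large central sub-annulus $|x|^{-\alpha}\mathrm{I}_\alpha f_\varepsilon(x)\to c_{\alpha p<n}\,|x|^{-n/p}$, so the quotient in (i) tends to $c_{\alpha p<n}$.

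Part (iii) is elementary. Hölder's inequality gives $|\mathrm{I}_\alpha f(x)|\le\|f\|_{L^p}\big(\int_\Omega|x-y|^{(\alpha-n)p'}\,dy\big)^{1/p'}$, the integral being finite precisely because $\alpha p>n$ forces $(\alpha-n)p'>-n$. Since $y\mapsto|x-y|^{(\alpha-n)p'}$ is radially decreasing about $x$, the bathtub (layer-cake) principle shows that, among sets of a prescribed volume, the ball centered at $x$ maximizes this integral; computing the corresponding radial integral over that ball and simplifying the powers of $\omega_{n-1}$ and $n$ yields $\|\mathrm{I}_\alpha f\|_{L^\infty}\le c_{\alpha p>n}\,|\Omega|^{\frac{\alpha p-n}{pn}}\|f\|_{L^p}$. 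When $\Omega=B$ is a ball, sharpness follows from the equality case of Hölder: take $x$ the center of $B$ and $f(y)=|x-y|^{(\alpha-n)/(p-1)}$, which lies in $L^p_c(B)$ because $(\alpha-n)p'>-n$; then $\mathrm{I}_\alpha f(x)=\|f\|_{L^p}^{p}=\|f\|_{L^p}\big(\int_B|x-y|^{(\alpha-n)p'}\,dy\big)^{1/p'}$, realizing $c_{\alpha p>n}$.

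The genuine difficulty is (ii), the Adams--Moser--Trudinger estimate for the Riesz potential, which I expect to be the main obstacle. After normalizing $\|f\|_{L^{n/\alpha}}=1$ and assuming $f\ge 0$, the plan is to follow Adams \cite{Adams88}: pass to the nonincreasing rearrangement, use O'Neil's rearrangement inequality to majorize $(\mathrm{I}_\alpha f)^{*}(t)$ by $t^{\alpha/n-1}\int_0^t f^{*}(s)\,ds+\int_t^\infty f^{*}(s)\,s^{\alpha/n-1}\,ds$, and then, via the logarithmic substitution $t=|\Omega|e^{-\tau}$, bound the left-hand exponential integral by the abstract one-dimensional Moser lemma subject to an $L^p$-type constraint; that lemma produces a bound depending only on $\alpha$ and $n$, with the threshold $n/\omega_{n-1}$ appearing because $\frac\alpha n-1=-\frac1{p'}$ and because the surface area $\omega_{n-1}$ controls the leading $|x-y|^{\alpha-n}$ singularity of the kernel. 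Optimality of $n/\omega_{n-1}$ on a ball $B=B(0,R)$ is checked with the Moser family $f_k(y)=(\omega_{n-1}\log k)^{-\alpha/n}\,|y|^{-n/p}\,\mathbf{1}_{\{R/k<|y|<R\}}$, for which $\|f_k\|_{L^{n/\alpha}}=1$ and $\mathrm{I}_\alpha f_k(x)\approx(\omega_{n-1}\log k)^{1/p'}$ on $\{|x|<R/k\}$, so that for any $\beta>n/\omega_{n-1}$ one obtains $\int_B\exp\big(\beta(\mathrm{I}_\alpha f_k)^{p'}\big)\,dx\gtrsim|B|\,k^{\beta\omega_{n-1}-n}\to\infty$, forcing the constant to depend on $\beta$. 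The delicate points will be the uniform control in the O'Neil step and the exact propagation of constants through the Moser change of variables; the remainder is bookkeeping, and (ii) in turn gives the Adams inequality \eqref{ada10} directly through the pointwise bound \eqref{aR}.
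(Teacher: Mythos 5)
Your proposal is correct, and in places it is more self-contained than the paper's own treatment. For part (i) the paper simply cites Herbst and Beckner for the value of the Stein--Weiss constant, whereas you actually derive it: the Schur test with the power weight $h(x)=|x|^{-n/(pp')}$, fed by the Riesz composition identity, is exactly the classical route for a positive kernel homogeneous of degree $-n$, and the truncated power $|y|^{-n/p}\mathbf{1}_{\{\varepsilon<|y|<1/\varepsilon\}}$ is the right extremizing family because $|y|^{-n/p}$ is the improper eigenfunction. One thing worth flagging: your Schur computation produces
$$A=\frac{\pi^{n/2}\,\Gamma\!\big(\tfrac{\alpha}{2}\big)\Gamma\!\big(\tfrac{n}{2p}-\tfrac{\alpha}{2}\big)\Gamma\!\big(\tfrac{n(p-1)}{2p}\big)}{\Gamma\!\big(\tfrac{n-\alpha}{2}\big)\Gamma\!\big(\tfrac{n(p-1)}{2p}+\tfrac{\alpha}{2}\big)\Gamma\!\big(\tfrac{n}{2p}\big)},$$
which lacks the factor $2^{\alpha(p-1)/p}$ appearing in the theorem's displayed $c_{\alpha p<n}$; a check at $(\alpha,p,n)=(1,2,3)$ against the classical sharp Hardy inequality $\||x|^{-1}u\|_{L^2}\le\frac{2}{n-2}\|\nabla u\|_{L^2}$ (equivalently $\||x|^{-1}\mathrm{I}_1 f\|_{L^2}\le 4\pi^{2}\|f\|_{L^2}$ after accounting for $I_1=c_{3,1}\mathrm{I}_1$) gives $A=4\pi^2$ exactly, so the displayed extra power of two appears to be an error in the statement rather than a gap in your argument. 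For part (ii) you and the paper do the same thing: the paper cites Adams outright, and you reproduce his outline (nonincreasing rearrangement, O'Neil's inequality, logarithmic substitution, one-dimensional Moser lemma); your sketch and your Moser-type family for sharpness are the right ones, though, as you acknowledge, the uniform control in the O'Neil step and in the Moser lemma is exactly where the work lies. For part (iii) your argument and the paper's coincide in substance: you package the bound for $\int_\Omega|x-y|^{(\alpha-n)p'}\,dy$ via the bathtub principle where the paper does an equivalent Fubini/layer-cake computation against $\min\{\omega_{n-1}r^n/n,\,|\Omega|\}$, and you produce the extremal $f(y)=|x-y|^{(\alpha-n)/(p-1)}$ directly from the equality case of Hölder, which lands on the same test function the paper obtains by minimizing over $\beta$ in $f_\beta=1_{B}|\cdot-x_0|^\beta$ (where the optimum is $\beta=-\tfrac{n-\alpha}{p-1}$), a small simplification that skips the one-variable optimization.
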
	
\begin{proof} (i) This is regarded as the sharp Stein-Weiss-Hardy inequality. The sharp constant $c_{\alpha p<n}$ is obtained in Herbst \cite{Her}; see also \cite{Be, Sa, GIT} for more information.

	(ii) This is just the sharp Adams inequality in \cite[Theorem 2]{Adams88} whose argument is still valid for $n=1$ and $\frac{\omega_{n-1}}{n}=2$.
	
	(iii) This is totally brand-new. In the sequel let
	$p'=\frac{p}{p-1}.
	$
	For any $f\in L^p$ supported on $\Omega$
		and for any
		$x\in \rn$,
		we utilize the H\"older inequality to derive that
\begin{align*}
	|\mathrm{I}_\alpha f(x)| \le\int_{\Omega}|f(y)||x-y|^{\alpha-n}\,dy
\le \|f\|_{L^p}\lf( \int_{\Omega} |x-y|^{(\alpha-n)p'}\,dy \r)^{\frac 1{p'}}.
	\end{align*}
Note that the Fubini theorem and $(\alpha-n)p'+n>0$ imply
\begin{align*}
\int_{\Omega} |x-y|^{(\alpha-n)p'}\,dy
&=(n-\alpha)p' \int_{\Omega} \lf(\int_{|x-y|}^\infty r^{(\alpha-n)p'-1}\, dr\r)\,dy \\
&=(n-\alpha)p' \int_0^\infty \lf(\int_{B(x,r)\cap \Omega} \,dy\r) r^{(\alpha-n)p'-1}\, dr \\
&\le (n-\alpha)p'  \int_0^\infty \min\lf\{\frac{\omega_{n-1}}{n}r^{n}, |\Omega|\r\}r^{(\alpha-n)p'-1}\, dr \\
&=(n-\alpha)p'  \lf(\frac{\omega_{n-1}}{n}
\int_0^{(\frac {n|\Omega|}{\omega_{n-1}})^\frac 1n} r^{(\alpha-n)p'+n-1}\,dr+ |\Omega|\int_{(\frac {n|\Omega|}{\omega_{n-1}})^\frac 1n}^\infty r^{(\alpha-n)p'-1}\, dr\r)\\
&= (n-\alpha)p'  \lf(\frac{1}{(\alpha-n)p'+n} +\frac 1{(n-\alpha)p'}\r)\left(\frac{\omega_{n-1}}{n}\right)^{\frac{(n-\az)p'}{n}}|\Omega|^{\frac{(\az-n)p'+n}{n}}\\
&=\left(\frac{n(p-1)}{\alpha p-n}\right)\left(\frac{\omega_{n-1}}{n}\right)^{\frac{(n-\az)p'}{n}}|\Omega|^{\frac{(\az-n)p'+n}{n}}.
	\end{align*}
Thus we arrive at the desired inequality
\begin{align*}
|\mathrm{I}_\alpha f(x)| \le  \|f\|_{L^p}\left(\frac{n(p-1)}{\alpha p-n}\right)^{\frac 1{p'}} \left(\frac{\omega_{n-1}}{n}\right)^\frac{n-\alpha}{n}{|\Omega|^{\frac{(\az-n)p'+n}{np'}}}.
	\end{align*}

To prove that
$$c_{\az p>n}= \left(\frac{\omega_{n-1}}{n}\right)^\frac{n-\alpha}{n}\left(\frac{n(p-1)}{\alpha p-n}\right)^{\frac{p-1}{p}}
$$
is sharp, let us consider the case
$$\Omega=B(x_0, r_0)\ \ \forall\ \ (x_0,r_0)\in\rn\times(0,\infty)$$ and the function
\begin{align*}
\rn\ni x\mapsto f_\beta(x)=1_{B(x_0, r_0)} |x-x_0|^\beta,
\end{align*}
where $\beta$ satisfies
$$\beta+\frac np>0.$$
On the one hand, a direct calculation gives
\begin{align*}
\|f_\beta\|_{L^p}
&= \lf(\int_{B(x_0, r_0)} |x-x_0|^{\beta p}\,dx\r)^{\frac 1p}\\
&=\lf(\omega_{n-1}\int_0^{r_0}r^{\beta p+n-1}\,dr\r)^{\frac 1p}\\
&=\lf(\frac{\omega_{n-1}}{\beta p+n}r_0^{\beta p+n}\r)^{\frac 1p}\\
&=\left(\frac{\omega_{n-1}}{n}\right)^{\frac1p} \lf(\frac n{\beta p+n}\r)^\frac 1 p r_0^{\beta+\frac np}.
\end{align*}
On the other hand, by the fact $\az+\beta>\az-\frac np>0$, we get
\begin{align*}
|\mathrm{I}_\alpha f_\beta(x_0)|
&=\int_{B(x_0, r_0)} |x-x_0|^{\az-n+\beta}\,dx
=\omega_{n-1}\int_0^{r_0}r^{\az+\beta p-1}\,dr
= \frac{\omega_{n-1}}{\az+\beta}r_0^{\az+\beta}.
\end{align*}
Combining the last two formulae gives
\begin{align*}
 c_{\az p>n}
 &\ge \sup_{x\in B(x_0,r_0)}\frac{|\mathrm{I}_\alpha f_\beta(x)|}{|B(x_0,r_0)|^{\frac{\alpha p-n}{np}} \|f_\beta\|_{L^p}} \\
 &\ge \frac{|\mathrm{I}_\alpha f_\beta(x_0)|}{\big(\frac{\omega_{n-1}}{n}\big)^{\frac{\alpha p-n}{np}}r_0^{\alpha-\frac{n}{p}} \|f_\beta\|_{L^p}}\\
 &= \left(\frac{\omega_{n-1}}{n}\right)^{\frac{n-\alpha }{n}} n^{1-1/p} \lf(\frac{\beta p+n}{(\az+\beta)^p}\r)^\frac 1p.
\end{align*}
Now the problem turns to calculate
$$
\sup_{\beta\in (-\frac np, \infty)} \frac{\beta p+n}{(\az+\beta)^p}.
$$
Consider the function
$$
-\frac{n}{p}<\beta\mapsto
h(\beta )= \frac{\beta  p+n}{(\az+\beta)^p}.
$$
Note that
\begin{align*}
h'(\beta)
= p(\az+\beta)^{-p}-p(\beta  p+n)(\az+\beta)^{-p-1}
=-p(\az+\beta)^{-p-1}(\beta(p-1)+n-\az).
\end{align*}
and
$$
\begin{cases}
h'(\beta)\ge 0\;\; \text{ if}\;\; \beta \le -\frac{n-\az}{p-1}\\
h'(\beta)\le 0\;\; \text{ if}\;\; \beta \ge -\frac{n-\az}{p-1}.
\end{cases}
$$
So, this, combined with
$$
\lim_{\beta\to -\frac np} h(\beta)=0,
$$
shows that $h$ attains its sharp value at the point
$$\beta=-\frac{n-\az}{p-1}.$$
Consequently,
$$
\sup_{\beta\in (-\frac np, \infty)} \frac{\beta p+n}{(\az+\beta)^p}
=\lf(\frac{\az p-n}{p-1}\r)^{1-p}.
$$
This in turn implies
\begin{align*}
 c_{\az p>n}
 &\ge \sup_{x\in B(x_0,r_0)}\frac{|\mathrm{I}_\alpha f_\beta(x)|}{|B(x_0,r_0)|^{\frac{\alpha p-n}{np}} \|f_\beta\|_{L^p}}\\
 &=\sup_{\beta\in (-\frac np, \infty)}\left(\frac{\omega_{n-1}}{n}\right)^{\frac{n-\alpha }{n}} n^{1-1/p} \lf(\frac{\beta p+n}{(\az+\beta)^p}\r)^\frac 1p\\
& =\left(\frac{\omega_{n-1}}{n}\right)^{\frac{n-\alpha }{n}}\left(\frac{n(p-1)}{\alpha p-n}\right)^{\frac{p-1}{p}}\\
&=c_{\alpha p>n}.
\end{align*}
Accordingly, {when $\Omega$ is a Euclidean ball of $\rn$, it holds that
$$
\sup_{f\in L^p_c(\Omega)}\frac{\|\mathrm{I}_\alpha f\|_{L^\infty}}{|\Omega|^{\frac{\alpha p-n}{pn}} \|f\|_{L^p}}=c_{\alpha p>n}.
$$}
\end{proof}

	\subsection{Optimal domination for $\nabla^{0<s<1}_\pm$}\label{s2.2}

Interestingly and naturally, with
$$\nabla^{m\in\{\text{even}\}}=(-1)^\frac{m}{2}(-\Delta)^{\frac m2}\qquad \text{or}\qquad  \nabla^{m\in\{\text{odd}\}}=(-1)^\frac{m-1}{2}\nabla(-\Delta)^{\frac {m-1}2}
=(-1)^\frac{m-1}{2}{\vec R}(-\Delta)^\frac{m}{2}
$$
replaced by the fractional version
$$\nabla^s_+=(-\Delta)^\frac s2\qquad \text{or}\qquad
\nabla^s_-=\nabla(-\Delta)^{\frac{s-1}{2}}{=\vec R(-\Delta)^\frac s2},$$
Theorem \ref{t21} induces the following new assertion.

\begin{theorem}\label{t31}
	Let $0<s<1<p<\infty$ and
$$
\mathcal{F}_{s,\pm}(\Omega)=\begin{cases}
I_s\big(C^\infty_c(\Omega)\big)\ \ &\text{for}\ \ \nabla^s_+\\
(-\Delta)^\frac{1-s}{2} \big(C_c^\infty(\Omega)\big) \ \ &\text{for}\ \ \nabla^s_-.
\end{cases}
$$
Then the following assertions are true.	
\begin{itemize}
		
\item[\rm (i)] If $sp<p<n$, then
$$
\sup_{g\in{C_c^\infty}} \frac{\left(\int_{\rn}\big(|x|^{-s}|g(x)|\big)^p\,dx\right)^\frac1p}{\|\nabla^s_\pm g\|_{L^p}}=
		\kappa_{sp<n,\pm}=\begin{cases}
		 \frac{2^{-\frac{s}{p}}\Gamma(\frac{n}{2p}-\frac{s}{2})\Gamma(\frac{n(p-1)}{2p})}{\Gamma(\frac{n(p-1)}{2p}+\frac{s}{2})\Gamma(\frac{n}{2p})}
		\ \ &\text{for}\ \ \nabla^s_+\\
		 \Big(\frac{2^{1-s}p}{n-p}\Big)\left(\frac{\Gamma(\frac{n}{2p}-\frac{s}{2})\Gamma(\frac{n(p-1)}{2p}+\frac12)}{\Gamma(\frac{n(p-1)}{2p}+\frac{s}{2})\Gamma(\frac{n}{2p}-\frac12)}\right) \ \ &\text{for}\ \ \nabla^s_-.
		\end{cases}
		$$
		
\item[\rm (ii)] If $sp=n$ and $\Omega\subseteq\rn$ is a domain with volume $|\Omega|<\infty$,
then exists a positive constant $c_{sp=n,\pm}$ depending only on $s$ and $n$ such that
	$$
	\sup_{g\in{\mathcal{F}_{s,\pm}(\Omega)}}\int_\Omega\exp\left(\frac{\kappa |g(x)|}{\|\nabla^s_\pm g\|_{L^\frac{n}{s}}}\right)^\frac{n}{n-s}\,\frac{dx}{|\Omega|}\le c_{sp=n,\pm}\ \ \ \forall\ \ 0\le\kappa\le{\kappa_{sp=n,\pm}}.
	$$
	Here
$$
\kz_{sp=n,\pm}=\begin{cases}
	\lf(\frac n{\omega_{n-1}} \r)^{\frac {n-s}n} \frac{\pi^{\frac n2} 2^s \Gamma(\frac{s}{2})}{\Gamma(\frac{n-s}{2})}\ \ &\text{for}\ \ \nabla^s_+\\
	\lf(\frac n{\omega_{n-1}} \r)^{\frac {n-s}n} \frac{\pi^{\frac n2} 2^s \Gamma(\frac{s+1}{2})}{\Gamma(\frac{n+1-s}{2})}\ \ &\text{for}\ \ \nabla^s_-
	\end{cases}
	$$
	is sharp in the sense that if $\Omega$ is a Euclidean ball and $\kappa>\kappa_{sp=n,\pm}$ then the last integral inequality cannot hold without forcing $c_{sp=n,\pm}$ to depend only on $s$ and $p$.

	\item[\rm (iii)] If $sp>n$ and $\Omega\subseteq\rn$ is a domain with volume $|\Omega|<\infty$
then
	$$
	\sup_{g\in{\mathcal{F}_{s,\pm}(\Omega)}}\frac{|\Omega|^\frac{s p-n}{pn}\|g\|_{L^\infty}}{\|\nabla^s_\pm g\|_{L^p}}\le \kappa_{sp>n,\pm}=\begin{cases}
c_{sp>n}\left(\frac{\Gamma(\frac{n-s}{2})}{2^s\pi^\frac{n}{2}\Gamma(\frac{s}{2})}\right)\ \ &\text{for}\ \ \nabla^s_+\\
c_{sp>n}\left(\frac{\Gamma(\frac{n-s+1}{2})}{2^s\pi^\frac{n}{2}\Gamma(\frac{1+s}{2})}\right) \ \ &\text{for}\ \ \nabla^s_-.
\end{cases}
$$
Moreover, the constant $\kappa_{sp>n,\pm}$ is sharp in the sense that if $\Omega$ is a Euclidean ball then
$$
\sup_{g\in{\mathcal{F}_{s,\pm}(\Omega)}}\frac{\|g\|_{L^\infty}}{\,|\Omega|^\frac{s p-n}{pn}\|\nabla^s_{\pm}g\|_{L^p}}=\kappa_{sp>n,\pm}.
$$
\end{itemize}

\end{theorem}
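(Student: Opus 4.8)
The plan is to derive every bound, for both signs and in all three regimes, from Theorem~\ref{t21} with $\alpha=s$, after representing $g$ by a Riesz potential of $\nabla^{s}_{\pm}g$. The case $\nabla^{s}_{+}$ is essentially a change of variables; the case $\nabla^{s}_{-}$ needs a pointwise Riesz representation built from the identity $R_{j}I_{s}=-\partial_{j}I_{1+s}$, and for the Hardy--Rellich bound it additionally needs the sharp weighted inequality behind \eqref{eHR}.

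\textbf{The operator $\nabla^{s}_{+}$.} Since $\nabla^{s}_{+}=(-\Delta)^{s/2}$ and $I_{s}=c_{n,s}\mathrm{I}_{s}$ are Fourier multipliers with reciprocal symbols $(2\pi|\xi|)^{\pm s}$, every $g\in C_c^\infty$ obeys $g=I_{s}(\nabla^{s}_{+}g)$ with $f:=\nabla^{s}_{+}g\in L^p$ (it decays like $|x|^{-n-s}$), while for $g\in\mathcal{F}_{s,+}(\Omega)=I_{s}(C_c^\infty(\Omega))$ one has the stronger $f\in L^p_c(\Omega)$. Writing $g=c_{n,s}\mathrm{I}_{s}f$ converts $\|\,|x|^{-s}g\|_{L^p}$, $\exp(\kappa|g|/\|\nabla^{s}_{+}g\|_{L^{n/s}})^{n/(n-s)}$ and $\|g\|_{L^\infty}$ into the same expressions for $\mathrm{I}_{s}f$ scaled by $c_{n,s}$, so (i), (ii), (iii) follow directly from Theorem~\ref{t21}(i), (ii), (iii). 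The constants match by a short $\Gamma$-identity: $c_{n,s}c_{sp<n}=\kappa_{sp<n,+}$ and $c_{n,s}c_{sp>n}=\kappa_{sp>n,+}$, while in the borderline case the requirement $(\kappa c_{n,s})^{n/(n-s)}\le n/\omega_{n-1}$ forces exactly $\kappa\le\kappa_{sp=n,+}$. Sharpness transfers from the extremal (sequences) of Theorem~\ref{t21}: for (ii), (iii) on a ball these are compactly supported, so $I_{s}$ of them lies in $\mathcal{F}_{s,+}(\Omega)$; for (i) the near-extremizers $g=I_{s}f$ decay only polynomially, so one truncates $g\mapsto\chi_{R}g$ and checks $\|\nabla^{s}_{+}(\chi_{R}g)-\chi_{R}\nabla^{s}_{+}g\|_{L^p}\to0$ as $R\to\infty$ to return to $C_c^\infty$.

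\textbf{The operator $\nabla^{s}_{-}$.} Using $\vec{R}(-\Delta)^{1/2}=-\nabla$ and $(-\Delta)^{s/2}(-\Delta)^{(1-s)/2}=(-\Delta)^{1/2}$, a function $g=(-\Delta)^{(1-s)/2}\phi$ with $\phi\in C_c^\infty(\Omega)$ satisfies $\nabla^{s}_{-}g=-\nabla\phi$, so $\|\nabla^{s}_{-}g\|_{L^p}=\|\nabla\phi\|_{L^p}$; and since $R_{j}I_{s}=-\partial_{j}I_{1+s}$ has convolution kernel $c_{n,1+s}(n-1-s)\,x_{j}|x|^{-(n+1-s)}$, one obtains
\[
g(x)=c_{n,1+s}(n-1-s)\int_{\rn}\frac{(x-y)\cdot\nabla\phi(y)}{|x-y|^{\,n+1-s}}\,dy,
\]
whence $|g(x)|\le\widetilde{c}_{n,s}\,\mathrm{I}_{s}(|\nabla\phi|)(x)$ with $\widetilde{c}_{n,s}:=c_{n,1+s}(n-1-s)=\Gamma(\tfrac{n+1-s}{2})/(\pi^{n/2}2^{s}\Gamma(\tfrac{1+s}{2}))$ and $|\nabla\phi|\in L^p_c(\Omega)$. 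Plugging this into Theorem~\ref{t21}(ii), (iii) with $\alpha=s$ gives (ii), (iii) with constants $\kappa_{sp=n,-}$ and $\kappa_{sp>n,-}=c_{sp>n}\widetilde{c}_{n,s}$ (again the borderline case needs the exponent bookkeeping $(\kappa\widetilde{c}_{n,s})^{n/(n-s)}\le n/\omega_{n-1}$). For their sharpness on a ball $B(x_{0},r_{0})$ one takes $\phi$ radial about $x_{0}$ with $\nabla\phi$ pointing inward and $|\nabla\phi|$ equal (after a mild smoothing) to the relevant extremizer of Theorem~\ref{t21}, so that $(x_{0}-y)\cdot\nabla\phi(y)=|\nabla\phi(y)|\,|x_{0}-y|$ and the triangle inequality becomes an equality at $x_{0}$: this yields $|g(x_{0})|=\widetilde{c}_{n,s}\|\mathrm{I}_{s}(|\nabla\phi|)\|_{L^\infty}$, which is precisely what (iii) needs and, after localizing near $x_{0}$, reproduces the exponential blow-up for (ii). For (i) the pointwise bound is wasteful — a radially inward field aligns with $x-y$ only at $x=0$ — so instead one sets $v=I_{1-s}g$ (so $\nabla v=\nabla^{s}_{-}g$ and $g=(-\Delta)^{(1-s)/2}v$) and invokes the sharp weighted inequality $\|\,|x|^{-s}(-\Delta)^{(1-s)/2}v\|_{L^p}\le\kappa_{sp<n,-}\|\nabla v\|_{L^p}$, the $s$-analogue of the odd-order case of \eqref{eHR} furnished by \cite[Corollary~1 \& Theorem~4~(16)]{Be}, with the constant read off verbatim.

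\textbf{The main obstacle.} The essential difficulty is the sharpness of the two finite-measure estimates for $\nabla^{s}_{-}$: the extremizers of Theorem~\ref{t21} are not themselves fractional gradients, so one must construct admissible $\phi_{k}\in C_c^\infty(\Omega)$ whose gradients realize the directional profile of those extremizers and then control the defect in $|g_{k}|\le\widetilde{c}_{n,s}\mathrm{I}_{s}(|\nabla\phi_{k}|)$, which is strictly positive away from $x_{0}$, tightly enough that the $L^\infty$-value (respectively the exponential integral) still converges to the stated constant. By contrast, the cross-checking of the $\Gamma$-factors $c_{n,s}$ and $\widetilde{c}_{n,s}$ against Herbst's and Adams' constants is routine, as is the truncation used for part (i) with $\nabla^{s}_{+}$.
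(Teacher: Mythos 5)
Your proposal follows the same route as the paper: part (i) is outsourced to Beckner's sharp Pitt/Hardy inequality, parts (ii) and (iii) for $\nabla^s_+$ reduce to Theorem~\ref{t21} via $g=I_su$, and for $\nabla^s_-$ you use exactly the pointwise Riesz bound $|g|\le\kappa_{-s}\,\mathrm I_s|\nabla u|$ (your $\widetilde c_{n,s}$ equals the paper's $\kappa_{-s}$) together with radial test functions whose gradients align with $x_0-y$ so the bound is saturated at the center. Your reading of the main obstacle is also the one the paper actually confronts: for the Moser case the paper builds the explicit Adams--Fuglede profile $u_r=|x|^{1-s}\mathbf 1_{\mathbb B^n\setminus\mathbb B^n_r}/((1-s)\omega_{n-1}\log\frac1r)$, verifies $|(-\Delta)^{(1-s)/2}u_r|\ge\kappa_{-s}(1-\epsilon)$ on $\mathbb B^n_r$ by a polar-coordinate computation with a controlled error term, and pushes $r\downarrow0$ to recover $\kappa_{sp=n,-}$ — which is the flesh on the bones of your "localize near $x_0$ and control the defect" step.
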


\begin{proof} The sharp inequalities in (i), (ii) and (iii) are suitably called the sharp Hardy-Rellich, Adams-Moser and Morrey-Sobolev inequalities for the fractional order twin gradients $\nabla^{0<s<1}_\pm$, respectively. Since (i) follows readily from \cite[Corollary 1 \& Theorem 4 (16)]{Be}, the definition of $\nabla^s_\pm$ and $I_s=(-\Delta)^{-\frac{s}{2}}$, it remains to verify (ii)-(iii).

{\it Case - $\nabla^s_+$}. Under this situation we have
$$
	g\in I_s\big(C_c^\infty(\Omega)\big)\Longleftrightarrow \exists\ u\in C_c^\infty(\Omega)\ \text{such that}\ g=I_su
	$$
and
	$$
\nabla^s_+ g=(-\Delta)^\frac{s}{2}I_su=u\in C_c^\infty(\Omega).
	$$
This, along with {Theorem \ref{t21}}(ii)/(iii), directly gives the desired conclusion in  (ii)/(iii) for $\nabla^s_+$ and the corresponding sharp case.

{\it Case - $\nabla^s_-$}. From the hypothesis $$g\in (-\Delta)^\frac{1-s}{2} \big(C_c^\infty\big)
	$$
	it follows that
	$$g=(-\Delta)^\frac{1-s}{2} u\quad\text{for some}\quad u\in C_c^\infty,$$
	and hence
	$$
	\nabla^s_-g=\nabla^s_- (-\Delta)^\frac{1-s}{2} u=\nabla u.
	$$
	Also, according to \cite[(5.6)\&(4.4)]{S} we have
\begin{align}\label{eq-frac1}
	\begin{cases}
	-(-\Delta)^\frac{1-s}{2}u=\text{div}^{-s}\nabla u=\kappa_{-s}\int_{\mathbb R^n}\frac{h\cdot\nabla u(x+h)}{|h|^{n-s+1}}\,dh\\
	\kappa_{-s}=\frac{\Gamma\big(\frac{n-s+1}{2}\big)}{2^{s}\pi^{\frac{n}{2}}\Gamma\big(\frac{1+s}{2}\big)},
	\end{cases}
\end{align}
	thereby finding
	\begin{align}\label{eq-frac}
	|g(x)|=|(-\Delta)^\frac{1-s}{2}u(x)|\le\kappa_{-s}\int_{\rn}{|x-y|^{s-n}}{|\nabla u(y)|}\,dy=\kappa_{-s}\mathrm{I}_s|\nabla u|(x),
	\end{align}
which exists as a fractional variant of \eqref{aR}. In light of \eqref{eq-frac} and Theorem \ref{t21}(ii)/(iii), we obtain the desired inequality in Theorem \ref{t31}(ii)/(iii).

To see that $\kappa_{sp\ge n,-}$ is sharp, we consider two situations below.

\begin{itemize}

\item[$\rhd$] $sp=n$. Without loss of generality we may assume that $\Omega$ is the origin-centered unit ball $\mathbb B^n$. If for some $\kappa>\kz_{sp=n,-}$ it holds that
	\begin{equation}
	\label{eB}
	\sup_{u\in C^\infty_c(\mathbb B^n)}\int_{\mathbb B^n}\exp\left(\frac{\kappa|(-\Delta)^\frac{1-s}{2}u(x)|}{\big\||\nabla u|\big\|_{L^p}}\right)^\frac{n}{n-s}\,\frac{dx}{|\mathbb B^n|}=\sup_{g\in\mathcal{F}_{s,-}(\mathbb B^n)}\int_{\mathbb B^n}\exp\left(\frac{\kappa| g(x)|}{\|\nabla^s_- g\|_{L^p}}\right)^\frac{n}{n-s}\,\frac{dx}{|\mathbb B^n|}\le c_{sp=n,-},
	\end{equation}
then we are about to construct suitable functions $u$ to show that \eqref{eB} forces
$\kappa\le\kz_{sp=n,-}$, thereby revealing that $\kz_{sp=n,-}$ is the sharp number to guarantee Theorem \ref{t31}(ii).

{Being somewhat motivated by \cite[pp.391-392]{Adams88} and  \cite[p.7]{Fu}, for $r\in (0,1)$} we let $\mathbb B^n_r$ be the origin-centered ball with radius $r$ and
$$
u_r(x)=\frac{|x|^{1-s}1_{\mathbb B^n\setminus\mathbb B^n_r}(x)}{(1-s)\omega_{n-1}\log\frac1r}.
$$
Then
	\begin{equation}
	\label{eB1}
	\begin{cases}
	\nabla u_r(x)=\frac{x|x|^{-1-s} 1_{\mathbb B^n\setminus\mathbb B^n_r}(x)}{\omega_{n-1}\log\frac1r}\\
	\big\||\nabla u_r|\big\|_{L^p}=\Big(\omega_{n-1}\log\frac1r\Big)^\frac{1-p}{p}=\Big(\omega_{n-1}\log\frac1r\Big)^\frac{s-n}{n}.
	\end{cases}
	\end{equation}
	Consequently, we use the first equation in \eqref{eB1}, \eqref{eq-frac1} and the polar-coordinate-system to achieve that if $x\in\mathbb B^n_r$ then
	
	\begin{align*}
	{-(-\Delta)^\frac{1-s}{2}u_r(x)}&=\kappa_{-s}\int_{\mathbb R^n}\frac{h\cdot\nabla u_r(x+h)}{|h|^{n-s+1}}\,dh\\
	&=\kappa_{-s}\int_{\mathbb B^n\setminus\mathbb B^n_r}\frac{(z-x)\cdot\nabla u_r(z)}{|z-x|^{n+1-s}}\,dz\\
	&=\left(\frac{\kappa_{-s}}{\omega_{n-1}\log\frac1r}\right)\int_{\mathbb B^n\setminus\mathbb B^n_r}\frac{(z-x)\cdot \frac{z}{|z|^{1+s}}}{|z-x|^{n+1-s}}\,dz\\
		&=\left(\frac{\kappa_{-s}}{{\omega_{n-1}}\log\frac1r}\right)\int_r^1\left(\int_{\mathbb S^{n-1}}\frac{\big(\theta-\frac{x}{\rho}\big)\cdot \theta}{\big|\theta-\frac{x}{\rho}\big|^{n+1-s}}\,d\theta\right)\frac{d\rho}{\rho}\\
		 &=\left(\frac{\kappa_{-s}}{\log\frac1r}\right)\int_{|x|}^{\frac{|x|}{r}}\left({\frac1{\omega_{n-1}}}\int_{\mathbb S^{n-1}}\frac{\big(\theta-t\frac{x}{|x|}\big)\cdot \theta}{\big|\theta-t\frac{x}{|x|}\big|^{n+1-s}}\,d\theta\right)\frac{dt}{t}\\
		&=\left(\frac{\kappa_{-s}}{\log\frac1r}\right)\int_{|x|}^{\frac{|x|}{r}} U(t)\,\frac{dt}{t},
		\end{align*}
where
$$U(t)=\frac1{\omega_{n-1}}\int_{\mathbb S^{n-1}}\frac{\big(\theta-t\frac{x}{|x|}\big)\cdot \theta}{\big|\theta-t\frac{x}{|x|}\big|^{n+1-s}}\,d\theta$$
{is independent of the variable $x$ after a rotation.}
Since $U(0)=1$, we write
\begin{align*}
	\int_{|x|}^{\frac{|x|}{r}} U(t)\,\frac{dt}{t}&=\int_{|x|}^{\frac{|x|}{r}}U(0)\,\frac{dt}{t}+\int_{|x|}^{\frac{|x|}{r}} \big(U(t)-U(0)\big)\,\frac{dt}{t}=\log\frac1r+T(|x|,r).
	\end{align*}
For the error term $T(|x|,r)$, observing that
\begin{align*}
\int_0^1 |U(t)-U(0)|\, \frac{dt}{t}
&\le \int_0^{1/2}  t\sup_{\tau\in(0,1/2)}|\nabla U(\tau)|\, \frac{dt}{t}+\int_{1/2}^1 |U(t)-1|\, \frac{dt}{t}\ls 1,
\end{align*}
we therefore derive that,  for $\epsilon>0$ there is a sufficiently small $r_0>0$ such that
	$$
	\sup_{x\in \mathbb B_r^n}\left|T(|x|,r)\Big(\log\frac1r\Big)^{-1}\right|\le \sup_{x\in \mathbb B_r^n}\left|\frac1{\log\frac1r}\int_0^1 |U(t)-U(0)|\, \frac{dt}{t}\right| <\epsilon\ \ \ \forall\ \ 0<r\le r_0.
	$$
So, we have
\begin{align*}
|(-\Delta)^\frac{1-s}{2}u_r(x)|
\ge \kz_{-s}(1-\ez) \qquad \forall\ \ (x,r)\in \mathbb B_r^n\times(0, r_0].
\end{align*}
This, along with \eqref{eB} and the second formula of \eqref{eB1}, gives
	\begin{align*}
	c_{sp=n,-}
\ge \int_{\mathbb B_r^n}\exp\left(\frac{\kappa|(-\Delta)^\frac{1-s}{2}u_r(x)|}{\big\||\nabla u_r|\big\|_{L^p}}\right)^\frac{n}{n-s}\,\frac{dx}{|\mathbb B^n|}
\ge r^n\exp\left(\frac{\kz\kz_{-s}(1-\ez)}{\Big(\omega_{n-1}\log\frac1r\Big)^\frac{s-n}{n}}\right)^\frac{n}{n-s},
	\end{align*}
	which in turns implies that if $0<r\le r_0$ then
	$$
\kz\kz_{-s}(1-\ez)\le \lf(\log\frac{c_{sp=n,-}}{r^n}\r)^\frac{n-s}{n} \Big(\omega_{n-1}\log\frac1r\Big)^\frac{s-n}{n}
=\lf(\frac{\log\frac{c_{sp=n,-}}{r^n}}{\omega_{n-1}\log\frac1r}\r)^\frac{n-s}{n}.$$
Letting {$\ez\downarrow0$ and $r\downarrow0$} yields
	$$
	\kappa\kappa_{-s}\le\left(\frac{n}{\omega_{n-1}}\right)^\frac{n-s}{n}\ \ \text{i.e.}\ \ \ \kappa\le\kappa_{sp=n,-}=\big(\kappa_{-s}\big)^{-1}\left(\frac{n}{\omega_{n-1}}\right)^\frac{n-s}{n},
	$$
	as desired.

\item[$\rhd$] $sp>n$. Let
$$
\begin{cases}(x_0,r_0)\in\rn\times(0,\infty)\\
\Omega=B(x_0, r_0)\\
\beta=-\frac{n-s}{p-1}\\
u_\beta(x)=(\beta+1)^{-1} 1_{B(x_0, r_0)} |x-x_0|^{\beta+1}\\
g_\beta(x)= (-\Delta)^\frac{1-s}{2} u_\beta(x).
\end{cases}
$$
Notice that $u_\beta$ can be approximated by functions in $C_c^\infty$ and
%
$$\nabla^s_-g_\beta(x)=\nabla u_\beta (x)= 1_{B(x_0, r_0)} |x-x_0|^{\beta} \frac{x-x_0}{|x-x_0|}.$$
So, by \eqref{eq-frac1} and the calculations in the proof of Theorem \ref{t21}(iii), we obtain
$$
\|\nabla^s_-g_\beta\|_{L^p}=\lf(\int_{B(x_0, r_0)}|x-x_0|^{\beta p}\, dx\r)^{\frac 1p}
= \left(\frac{\omega_{n-1}}{n}\right)^{\frac1p} \lf(\frac n{\beta p+n}\r)^\frac 1 p r_0^{\beta+\frac np}
$$
and
\begin{align*}
|g_\beta(x_0)|
= \kappa_{-s}\lf|\int_{\mathbb R^n}\frac{h\cdot\nabla u_\beta(x_0+h)}{|h|^{n-s+1}}\,dh\r|
=  \kappa_{-s}\int_{|h|<r_0} |h|^{\beta+s-n}\, dh
=  \kappa_{-s} \left(\frac{\omega_{n-1}}{\beta+s}\right) r_0^{\beta+s}.
\end{align*}
This in turn implies
\begin{align*}
\kappa_{sp>n,-}&\ge\sup_{g\in\mathcal{F}_{s,-}(B(x_0,r_0))}\frac{\|g\|_{L^\infty(B(x_0,r_0))}}{\,|B(x_0,r_0)|^\frac{s p-n}{pn}\|\nabla^s_{\pm}g\|_{L^p}}\\
&\ge\frac{|g_\beta(x_0)|}{\,|B(x_0,r_0)|^\frac{s p-n}{pn}\|\nabla^s_{\pm}g_\beta\|_{L^p}}\\
&=\kappa_{-s}c_{sp>n}\\
&=\kappa_{sp>n,-},
\end{align*}
and so $\kappa_{sp>n,-}$ is sharp.
\end{itemize}
\end{proof}

\section{Fractional Hardy-Sobolev spaces and their dualities}\label{s3}

\subsection{Fractional Hardy-Sobolev spaces $H^{s,p}$ and $H^{s,p}_\pm$}\label{s3.1}

Suppose $0<s<1\le p<\infty.
$
Since both $\nabla^s_+ u$ and $\nabla^s_-u$ are well defined when $u\in\cs_s'$,
the study for the case $p=1$ of \eqref{e3} in \cite{LXI} motivates us to consider the
fractional Hardy-Sobolev space
$$
H^{s,p}=\lf\{u\in\cs_s':\ [u]_{H^{s,p}}=\|(-\Delta)^\frac s2 u\|_{H^p}<\infty\r\}.
$$
Note that
$$u_1-u_2=\text{constant}\ \Longleftrightarrow\ [u_1]_{H^{s,p}}=[u_2]_{H^{s,p}}.
$$
So, $[\cdot]_{H^{s,p}}$ is properly a norm on quotient space of $H^{s,p}$ modulo the space of all real constants,
and consequently this quotient space is {a Banach space.}

Upon introducing
$$
H^{s,p}_\pm=\lf\{u\in\cs_s':\ [u]_{H^{s,p}_\pm}=\|\nabla^s_\pm u\|_{L^p}<\infty\r\},
$$
we find immediately
$$
H^{s,p}=H^{s,p}_+\cap H^{s,p}_{-}.
$$
Indeed, as shown in the next theorem, when $s\in(0,1)$ and $p\in(1,\infty)$, these three spaces are equal to each other and they
all have {the Schwartz class} $\cs$ and
$$\cs_\infty=\{\phi\in\cs:\, \text{the Fourier transform of $\phi$ is $0$ near the origin}\}$$
as dense subspaces.

\begin{theorem}\label{t3.1}
Let $0<s<1<p<\infty$. Then
$$
\cs_\infty{\subseteq} \cs\subseteq H^{s,p}=H^{s,p}_+=H^{s,p}_-. 
$$
Moreover, both $\cs_\infty$ and $\cs$ are dense in $H^{s,p}$ and $H^{s,p}_\pm$.
\end{theorem}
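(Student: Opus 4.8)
The overall strategy is to reduce to the classical $L^p$ theory via $H^p=L^p$ ($1<p<\infty$), so that $[u]_{H^{s,p}_+}=\|(-\Delta)^{s/2}u\|_{L^p}$, $[u]_{H^{s,p}_-}=\|\vec{R}(-\Delta)^{s/2}u\|_{L^p}$, and $[u]_{H^{s,p}}$ is the sum of the two. First I would dispose of the inclusions. That $\cs_\infty\subseteq\cs$ is by definition. For $\cs\subseteq H^{s,p}$ I would use the fact recorded in \S\ref{s12} that $\nabla^s_+$ and every $\nabla^s_j$ send $\cs$ into $\cs_s$: for $\phi\in\cs$ both $(-\Delta)^{s/2}\phi$ and $\nabla^s_j\phi$ are $C^\infty$ functions of size $O(|x|^{-n-s})$, hence lie in $L^p$ because $(n+s)p>n$ for $p>1$; since $H^{s,p}=H^{s,p}_+\cap H^{s,p}_-$ (already observed before the theorem) this puts $\phi$ in $H^{s,p}$.

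Next, the equality $H^{s,p}=H^{s,p}_+=H^{s,p}_-$: I would compare the three seminorms. From the $L^p$-boundedness of the Riesz transforms $R_j$ one gets $[u]_{H^{s,p}_-}\lesssim[u]_{H^{s,p}_+}$ and $[u]_{H^{s,p}}\approx[u]_{H^{s,p}_+}$, so $H^{s,p}_+\subseteq H^{s,p}_-$ and (via $H^{s,p}=H^{s,p}_+\cap H^{s,p}_-$) $H^{s,p}_+\subseteq H^{s,p}$. For the reverse I would invoke $\mathrm{id}=-\vec{R}\cdot\vec{R}=-\sum_jR_j^2$, which yields $(-\Delta)^{s/2}u=-\sum_jR_j(\nabla^s_ju)$; if $u\in H^{s,p}_-$ the right-hand side is an honest $L^p$ function, so $[u]_{H^{s,p}_+}\lesssim[u]_{H^{s,p}_-}$ and $H^{s,p}_-\subseteq H^{s,p}_+$. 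Thus all three spaces coincide with mutually equivalent seminorms.

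It then remains to prove density, and by the seminorm equivalence it suffices to approximate a given $u\in H^{s,p}_+$ by elements of $\cs_\infty$ in $[\cdot]_{H^{s,p}_+}$; once that is done, density of $\cs$ follows from $\cs_\infty\subseteq\cs\subseteq H^{s,p}$. Put $f=(-\Delta)^{s/2}u\in L^p$. The auxiliary fact I would establish is that $\cs_\infty$ is dense in $L^p$ for $1<p<\infty$: given $g\in\cs$ (which is dense in $L^p$), fix $\psi\in\cs$ with $\widehat{\psi}\in C_c^\infty$ and $\widehat{\psi}\equiv1$ near $0$, and set $\eta_\vp=\mathcal{F}^{-1}[\widehat{\psi}(\cdot/\vp)]=\vp^n\psi(\vp\,\cdot)$; then $g-g\ast\eta_\vp\in\cs_\infty$ (its Fourier transform vanishes on a ball of radius $\gtrsim\vp$ about $0$) while $\|g\ast\eta_\vp\|_{L^p}\le\|g\|_{L^1}\|\eta_\vp\|_{L^p}=\|g\|_{L^1}\|\psi\|_{L^p}\vp^{n/p'}\to0$. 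Choosing $g_k\in\cs_\infty$ with $g_k\to f$ in $L^p$ and setting $\phi_k=I_sg_k$ (note $I_s=(-\Delta)^{-s/2}$ has symbol $(2\pi|\xi|)^{-s}$, smooth and nonvanishing off $0$, hence maps $\cs_\infty$ onto $\cs_\infty$) gives $\phi_k\in\cs_\infty$ with $(-\Delta)^{s/2}\phi_k=g_k\to f$ in $L^p$, i.e.\ $[u-\phi_k]_{H^{s,p}_+}\to0$.

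The step I expect to be the main obstacle is the rigorous form of the identity $(-\Delta)^{s/2}u=-\sum_jR_j(\nabla^s_ju)$ for an arbitrary $u\in\cs_s'$ (not merely for test functions), i.e.\ that the a priori only-tempered distribution $(-\Delta)^{s/2}u$ is actually the $L^p$ function produced above. I would verify this on the Fourier side over $\cs_\infty$, where the composed multiplier $\sum_j(-i\xi_j/|\xi|)\,(-2\pi i\xi_j)(2\pi|\xi|)^{s-1}$ collapses cleanly to $-(2\pi|\xi|)^s$ with no singularity at $\xi=0$; two tempered distributions agreeing on $\cs_\infty$ can then differ only by one whose Fourier transform is supported at the origin, and the residual polynomial ambiguity is absorbed by the quotient-by-constants on which $H^{s,p}$ and $H^{s,p}_\pm$ are genuinely defined (recalling also that $\cs_s'$ carries no nonconstant polynomial). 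I would close by noting that $p\in(1,\infty)$ is indispensable here: at $p=1$ the $R_j$ are unbounded on $L^1$, and in any case $\cs_\infty$ fails to be dense in $L^1$ since $\int g=\widehat{g}(0)=0$ for every $g\in\cs_\infty$.
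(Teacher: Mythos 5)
Your proof is correct and follows essentially the same route as the paper: equate the three seminorms via $\mathrm{id}=-\sum_j R_j^2$ together with the $L^p$-boundedness of the Riesz transforms, then obtain density by approximating $(-\Delta)^{s/2}u$ in $L^p$ by $\cs_\infty$-functions and pulling back through $I_s$. You add useful detail that the paper elides --- a self-contained construction for the density of $\cs_\infty$ in $L^p$, and a Fourier-side justification of $(-\Delta)^{s/2}u=-\sum_j R_j\nabla^s_j u$ on $\cs_s'$ modulo constants --- but the structure of the argument coincides with the paper's.
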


\begin{proof}
Notice that any $u\in\cs$ satisfies $(-\Delta)^\frac s2 u\in\cs_s$ (cf. \cite{Sil}). Of course, any function in $\cs_s$ belongs to $L^{1<p<\infty}$.
We therefore obtain
$$\cs\subseteq {H^{s,p}}.$$

{Given $p\in(1,\infty)$, upon} recalling boundedness of the Riesz transforms $R_j$ on $L^p$ {(cf. \cite{St})} and the identity
$$\text{id}=-\sum_{j=1}^n R_j^2 \quad\text{in}\;\; L^p,$$
we achieve
$$\|f\|_{L^p}+\|\vec R f\|_{L^p}\approx \|f\|_{L^p}\approx \|\vec Rf\|_{L^p} \qquad \forall\; f\in L^p,$$
thereby reaching
$$
 \|(-\Delta)^\frac s2 u\|_{L^p}+\|\vec R (-\Delta)^\frac s2 u\|_{L^p}
\approx \|(-\Delta)^\frac s2 u\|_{L^p}
\approx\|\vec R (-\Delta)^\frac s2 u\|_{L^p}.
$$
This in turn implies
$$
	[u]_{H^{p,s}}\approx [u]_{H^{s,p}_+}\approx [u]_{H^{s,p}_-}.
$$
Consequently, we obtain
\begin{equation*}
H^{s,p}=H^{s,p}_+=H^{s,p}_-.
\end{equation*}

It suffices to show the density of $\cs_\infty$ in $H^{s,p}_+$. If $u\in H^{s,p}_+$, then
$$u\in\cs_s'\ \ \&\ \ (-\Delta)^\frac s2 u\in L^p.
$$
Due to the density of $\cs_\infty$ in $L^p$ (cf. the proof of \cite[Lemma~2.9(iii)]{LXI}), we can find a sequence $\{f_j\}_{j\in\nn}$ in $\cs_\infty$ such that
$$\lim_{j\to\infty}\|f_j- (-\Delta)^\frac s2 u\|_{L^p}= 0.$$
For any $j\in\nn$, we write
$$u_j=I_sf_j\in\cs_\infty.
$$
Upon noticing
$$f_j=(-\Delta)^\frac s2u_j,
$$
we obtain
$$
[u_j-u]_{H^{s,p}_+}=\|(-\Delta)^\frac s2 (u_j- u)\|_{L^p}=\|f_j- (-\Delta)^\frac s2 u\|_{L^p}\to0\qquad\text{as}\ \ j\to\infty.
$$
Thus, any $u\in H^{s,p}_+$  can be approximated by the $\cs_\infty$-functions $\{u_j\}_{j\in\nn}$.
\end{proof}

\subsection{Dual Hardy-Sobolev spaces $[H^{s,p}]^\ast$ and $[H^{s,p}_\pm]^\ast$}\label{s3.2}
In this subsection, we are about to show that these dual spaces can be characterized by
$$(T_0, T_1,\dots, T_n)\in\big( L^{\frac{p}{p-1}}\big)^{n+1}$$
solving the fractional differential equation
$$[\nabla^s_+]^\ast T_0=T\ \ \ \text{or}\ \ \ [\nabla^s_-]^\ast(T_1,\dots, T_n)=T.$$

\begin{theorem}\label{t3.2}
Let	{$0<s<1<p<\infty$}  and  $p'=\frac{p}{p-1}.$ Then for any distribution $T\in\cs'$ the following three assertions are equivalent:
\begin{itemize}
\item[\rm (i)] $T\in [H^{s,p}]^\ast=[H^{s,p}_+]^\ast=[H^{s,p}_-]^\ast$;
\item[\rm (ii)] $\exists$\ $T_0\in L^{p'}$ such that $T=[\nabla_+^s]^\ast T_0$ in $\cs'$;
\item[\rm (iii)] $\exists$\ $(T_1,\dots, T_n)\in (L^{p'})^{n}$ such that $T= [\nabla_-^s]^\ast (T_1,\dots, T_n)$ in $\cs'$.
\end{itemize}
\end{theorem}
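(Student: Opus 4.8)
The plan is to identify $H^{s,p}$ --- equivalently $H^{s,p}_+$ or $H^{s,p}_-$, via Theorem~\ref{t3.1} --- with the Lebesgue space $L^p$ through the operator $(-\Delta)^{s/2}=\nabla^s_+$, and then to transport the self-duality of $L^p$ $(1<p<\infty)$ back across this identification. As a preliminary reduction, since $\cs$ is dense in $H^{s,p}$ (Theorem~\ref{t3.1}) and $u_1-u_2=\text{constant}\Leftrightarrow[u_1]_{H^{s,p}}=[u_2]_{H^{s,p}}$, a distribution $T\in\cs'$ lies in $[H^{s,p}]^\ast$ precisely when $|\laz T,\phi\raz|\le C\,[\phi]_{H^{s,p}}$ for all $\phi\in\cs$ and some finite $C$, in which case $T$ has a unique bounded extension to $H^{s,p}$. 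Moreover, since $[u]_{H^{s,p}_+}=\|(-\Delta)^{s/2}u\|_{L^p}$ by definition and the kernel of $(-\Delta)^{s/2}$ on $H^{s,p}_+\subseteq\cs_s'$ consists exactly of the constants, the map $(-\Delta)^{s/2}\colon H^{s,p}_+/\RR\to L^p$ is a linear isometry onto a closed subspace $V\subseteq L^p$.

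For (i)$\Rightarrow$(ii): given $T\in[H^{s,p}]^\ast=[H^{s,p}_+]^\ast$, I would transport it through the isometry $(-\Delta)^{s/2}$ to a bounded functional on $V$, extend it by Hahn--Banach (with no increase of norm) to all of $L^p$, and apply the Riesz representation of $(L^p)^\ast$ for $1<p<\infty$ to obtain $T_0\in L^{p'}$ with $\laz T,\phi\raz=\int_{\rn}\big((-\Delta)^{s/2}\phi\big)(x)\,T_0(x)\,dx$ for all $\phi\in\cs$. By the definition of the adjoint, $\int_{\rn}\big((-\Delta)^{s/2}\phi\big)T_0=\laz T_0,\nabla^s_+\phi\raz=\laz[\nabla^s_+]^\ast T_0,\phi\raz$, so $T=[\nabla^s_+]^\ast T_0$ in $\cs'$. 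For (ii)$\Rightarrow$(i): if $T=[\nabla^s_+]^\ast T_0$ with $T_0\in L^{p'}$, then for $\phi\in\cs$ the H\"older inequality gives $|\laz T,\phi\raz|=\big|\laz T_0,(-\Delta)^{s/2}\phi\raz\big|\le\|T_0\|_{L^{p'}}\|(-\Delta)^{s/2}\phi\|_{L^p}=\|T_0\|_{L^{p'}}[\phi]_{H^{s,p}_+}$, and $[\phi]_{H^{s,p}_+}\approx[\phi]_{H^{s,p}}$ by Theorem~\ref{t3.1}, so $T\in[H^{s,p}]^\ast$ by the preliminary reduction.

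For (ii)$\Leftrightarrow$(iii) I would use only the identities $[\nabla^s_+]^\ast=(-\Delta)^{s/2}$ and $[\nabla^s_-]^\ast(T_1,\dots,T_n)=-\text{div}^s(T_1,\dots,T_n)=-(-\Delta)^{s/2}\big(\sum_{j=1}^n R_jT_j\big)$, the $L^{p'}$-boundedness of each Riesz transform $R_j$ $(1<p'<\infty)$, and the identity $\text{id}=-\sum_{j=1}^n R_j^2$. If (iii) holds, set $T_0:=-\sum_{j=1}^n R_jT_j\in L^{p'}$; then $T=[\nabla^s_-]^\ast(T_1,\dots,T_n)=(-\Delta)^{s/2}T_0=[\nabla^s_+]^\ast T_0$, giving (ii). If (ii) holds, set $T_j:=R_jT_0\in L^{p'}$ for $j=1,\dots,n$; then $-\sum_{j=1}^n R_jT_j=-\sum_{j=1}^n R_j^2T_0=T_0$, so $[\nabla^s_-]^\ast(T_1,\dots,T_n)=(-\Delta)^{s/2}T_0=T$, giving (iii). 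These operator identities follow at the level of Fourier multiplier symbols and therefore hold in $\cs'$, in particular against $\cs$-test functions.

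I expect the principal obstacle to be the bookkeeping in the first paragraph: verifying that $[H^{s,p}]^\ast$ really is the space of distributions that are $[\cdot]_{H^{s,p}}$-bounded on a dense subspace (so that the quotient by constants is automatically respected), that $(-\Delta)^{s/2}$ is an isometric embedding of $H^{s,p}_+/\RR$ into $L^p$ with closed range, and that the self-adjointness relation for $(-\Delta)^{s/2}$ and the identity $-\sum_j R_j^2=\text{id}$ hold in the distributional sense on $\cs_s'$ rather than merely for classical test functions. Once these structural points are in place, the argument is a routine combination of Hahn--Banach, the reflexivity and self-duality of $L^p$, and the $L^p$-continuity of the Riesz transforms, all for $1<p<\infty$.
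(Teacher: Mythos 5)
Your proof is correct, and it partly follows and partly improves on the paper's argument. For the equivalence (i)$\Leftrightarrow$(ii) you take essentially the same route as the paper: the paper phrases it as an application of the closed range theorem to $A_+=\left.(-\Delta)^{s/2}\right|_{H^{s,p}_+}$, while you phrase it as restriction to the closed isometric image $V\subseteq L^p$, Hahn--Banach extension to $L^p$, and Riesz representation of $(L^p)^\ast$; these are the same argument in two dialects. Where you genuinely diverge is in handling (iii). The paper repeats the closed-range machinery for the operator $A_-\colon H^{s,p}_-\to(L^p)^n$, which requires an injectivity verification by testing against $\cs_\infty$ and a separate Hahn--Banach extension from $[A_-(H^{s,p}_-)]^\ast$ to all of $(L^{p'})^n$, because the range of $A_-$ is only a proper closed subspace. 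You instead prove (ii)$\Leftrightarrow$(iii) directly: given $(T_1,\dots,T_n)\in(L^{p'})^n$ set $T_0:=-\sum_j R_jT_j$; given $T_0\in L^{p'}$ set $T_j:=R_jT_0$; each step rests only on the $L^{p'}$-boundedness of the Riesz transforms, the skew-symmetry $R_j^\ast=-R_j$, the identity $\mathrm{id}=-\sum_j R_j^2$, and the relation $[\nabla^s_-]^\ast\vec T=-(-\Delta)^{s/2}\vec R\cdot\vec T$, all of which pass to $\cs'$ via Fourier multiplier symbols. This buys a shorter and more explicit argument that avoids a second pass through the closed range theorem and makes the passage between the scalar and vector dual representations transparent. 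The bookkeeping items you flag in your last paragraph (quotient by constants, completeness so that the isometric range is closed, distributional validity of the operator identities on $\cs_s'$) are the right ones to record and are exactly those the paper also handles, so your plan is complete.
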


\begin{proof}

Note that {Theorem \ref{t3.1}} implies
\begin{equation}
\label{e32}
[H^{s,p}]^\ast=[H^{s,p}_+]^\ast=[H^{s,p}_-]^\ast.
\end{equation}
So, we begin with showing that (ii) implies (i)  by considering $H^{s,p}_+$.
If (ii) is valid, i.e., if
	$$T=[\nabla^s_+]^\ast T_0\ \ \text{in}\ \ \cs' \ \ \text{for some}\ \  T_0\in L^{p'},
	$$
	then
	\begin{align*}
	\laz T, \phi\raz
=\laz [\nabla^s_+]^\ast T_0, \phi\raz
	=\laz  T_0, \nabla^s_+\phi\raz
	=\laz  T_0, (-\Delta)^{\frac s2}\phi\raz\qquad \ \forall\ \ \phi\in\cs,
	\end{align*}
	and hence
	\begin{align*}
	|\laz T, \phi\raz| \le \|T_0\|_{L^{p'}} \| (-\Delta)^{\frac s2}\phi\|_{L^p}=\|T_0\|_{L^{p'}}[\phi]_{H^{s,p}_+} \qquad \ \forall\ \ \phi\in\cs.
	\end{align*}
	Accordingly, using the density of $\cs$ in $H^{s,p}_+$, we see that $T$ induces a bounded linear functional on $H^{s,p}_+$.
This proves that
$$T\in [H^{s,p}_+]^\ast$$ and (i) holds due to \eqref{e32}.
	
	Conversely,	in order to show that (i) implies (ii), upon assuming
	$$T\in [H^{s,p}_+]^\ast,
	$$
	we are required to find $$T_0\in L^{p'}\ \ \text{such that}\ \
	T=[\nabla^s_+]^\ast T_0\ \ \text{in}\ \ \cs'.
	$$
{Inspiring by \cite[Proposition~1, pp.\,399-400]{BB}, we} consider the operator
	\begin{align*}
	A_+:\ \     H^{s,p}_+\to L^p\ \ \text{via}\ \
	u\mapsto A_+u=(-\Delta)^{\frac s2}u
	\end{align*}
	Evidently, the above-defined linear operator $A_+$ is  bounded and hence closed. Thus,
	if $$u\in H^{s,p}_+\ \ \text{enjoys}\ \ \|(-\Delta)^{\frac s2}u\|_{L^p}=0,$$ then $$(-\Delta)^{\frac s2}u=0\ \ \text{almost everywhere on}\ \ \rn,
	$$
	and hence
	$$u=I_s(-\Delta)^{\frac s2}u\equiv0\ \ \text{on}\ \  \rn.
	$$
	This in turn implies that the operator $A_+$ is injective. Moreover, due to
	$$
	\|A_+u\|_{L^p}=\|(-\Delta)^\frac{s}{2}u\|_{L^p}=[u]_{H^{s,p}_+},
	$$
the operator $A_+$ has actually a continuous inverse from $L^p$ to $H^{s,p}_+$.
 Accordingly, by the closed range theorem (see \cite[p.\,208, Corollary~1]{Y}), we know that the adjoint operator
	$${ A}_+^\ast:\, L^{p'} \to [H^{s,p}_+]^\ast\ \ \text{
		defined by}\ \
	\laz { A}_+^\ast  F, u\raz=\laz  F, { A}_+ u\raz\ \ \forall\ \ (F,u)\in L^{p'}\times H^{s,p}_+,$$
	is surjective. In particular, if $$T\in [H^{s,p}_+]^\ast,
	$$
	then there exists $$T_0\in L^{p'}\ \ \text{such that}\ \
	{ A}_+^\ast  T_0 = T.$$
	Consequently, for any $\phi\in\cs$, we have
	\begin{align*}
	\laz { A}_+^\ast T_0,\, \phi\raz=\laz T_0, { A}_+ \phi\raz  =  \laz T_0,  (-\Delta)^\frac s2 \phi\raz
	=\laz [\nabla^s_+]^\ast T_0,   \phi\raz,
	\end{align*}
	namely,
	$$
	T={ A}_+^\ast T_0 = [\nabla^s_+]^\ast T_0 \ \ \text{in}\ \ \cs'.
	$$
This completes the argument for that (i) implies (ii).

Next, we show that (iii) implies (i)  by considering $H^{s,p}_-$.
If
	$$
	T=[\nabla^s_-]^\ast \vec{T}\ \ \text{in}\ \ \cs'\ \ \text{for some}\ \  \vec T=(T_1,\dots,  T_n)\in (L^{p'})^n,
	$$
	then for any $\phi\in\cs$ we have
	\begin{align*}
	\laz T, \phi\raz
	&=\laz [\nabla^s_-]^\ast \vec T, \phi\raz\\
&	=-\sum_{j=1}^n\laz (-\Delta)^\frac s2R_jT_j, \phi\raz\\
&=-\sum_{j=1}^n\laz R_jT_j, (-\Delta)^\frac s2\phi\raz\\
&=\sum_{j=1}^n\laz T_j,  R_j(-\Delta)^\frac s2\phi\raz\\
&	=\sum_{j=1}^n\laz   T_j, \nabla^s_j\phi\raz,
	\end{align*}
whence
	\begin{align*}
	|\laz T, \phi\raz| \le \sum_{j=1}^n \|T_j\|_{L^{p'}} \|\nabla^s_j\phi\|_{L^p} \ \ \forall\ \ \phi\in\cs.
	\end{align*}
	Since $\cs$ is dense in $H^{s,p}_-$, it follows that $T$ induces a bounded linear functional on $H^{s,p}_-$. This shows (iii)$\Longrightarrow$(i).

Conversely, in order to show (i)$\Longrightarrow$(iii), assuming $$T\in [H^{s,p}_-]^\ast,
	$$
	we are about to verify that
	$$
	T=[\nabla^s_-]^\ast \vec T=(T_1,\dots, T_n)\ \ \text{in}\ \ \cs'\ \ \text{for some}\ \  \vec T\in (L^{p'})^n.
	$$
	To this end, we consider the bounded linear operator
	\begin{align*}
	A_-:\  H^{s,p}_-\to (L^{p})^n\ \ \text{via}\ \ u\mapsto \nabla_-^su.
	\end{align*}
	Now we validate that the just-defined operator $A_-$ is injective.
	If $$u\in H^{s,p}_-\ \ \text{satisfies}\ \
	\nabla^s_- u=0\ \ \text{in}\ \ (L^{p})^n,
	$$
then, for any $\psi\in\cs_\infty$, we apply the Fourier transform to derive
$$ \psi=-\sum_{j=1}^n \nabla^s_j I_s R_j\psi\ \ \text{with}\ \ I_s R_j\psi\in\cs_\infty{\subseteq} L^{p'},$$
thereby giving
\begin{align*}
\lf|\laz u, \psi\raz\r|
=\lf|\sum_{j=1}^n \laz u,  \nabla^s_j I_s R_j\psi\raz\r|
=\lf|\sum_{j=1}^n \laz \nabla^s_j  u,  I_s R_j\psi\raz\r|
\le \sum_{j=1}^n \|\nabla^s_j  u\|_{L^p} \|I_s R_j\psi\|_{L^{p'}}
{=0.}
\end{align*}
This, along with the density of $\cs_\infty$ in $L^{p'}$ (cf. the proof of \cite[Lemma~2.9(iii)]{LXI}), further gives
$$u=0\ \ \text{in}\ \ L^p \ \& \ \text{a.\,e.}\ \ \Longrightarrow\ \  u=0\ \ \text{in}\ \  H^{s,p}_-.$$
Accordingly, $A_-$ is an injective map from
$H^{s,p}_-$ onto $A_-(H^{s,p}_-)$ (the closed range of $A_-$) $\subseteq (L^p)^n$. This, along with
$$
\|A_-u\|_{L^p}=\|\nabla^s_-u\|_{L^p}=[u]_{H^{s,p}_-},
$$
ensures that
$A_-$  has a continuous inverse from $A_-(H^{s,p}_-)$ to $H^{s,p}_-$. Upon applying the closed range theorem (see \cite[p.\,208, Corollary~1]{Y}) we get that the adjoint operator
$$
{A}_-^\ast:\, \big[A_-(H^{s,p}_-)\big]^\ast \to [H ^{s,p}_- ]^\ast\ \ \text{
	via}\ \
\laz { A}_-^\ast \vec F, u\raz=\laz \vec F, { A}_- u\raz\ \ \forall\ \  (\vec{F},u)\in \big[A_-(H^{s,p}_-)\big]^\ast\times H^{s,p}_-
$$
is surjective, thereby finding
$$
\vec{T}_o\in \big[A_-(H^{s,p}_-)\big]^\ast\ \ \text{such that}\ \ {A}_-^\ast \vec{T}_o=T.
$$
Upon utilizing the Hahn-Banach theorem to extend $\vec{T}_o$ to
		$$
	\vec T=(T_1,\dots, T_n)\in(L^{p'})^n=\big[(L^p)^n\big]^\ast
	$$
	we have
	\begin{align*}
\laz T,\phi\raz=\laz {A}_-^\ast \vec T_o,\, \phi\raz = \laz \vec T_o,  A_- \phi\raz =  \laz \vec T,  \nabla^s_- \phi\raz
	=\laz [\nabla^s_-]^\ast \vec T,   \phi\raz\ \ \forall\ \ \phi\in\cs,
	\end{align*}
	whence
	$$
	T={A}_-^\ast \vec T = {[\nabla^s_-]^\ast \vec T}\ \ \text{in}\ \ \cs'.
	$$
This completes the argument for (i)$\Longrightarrow$(iii).
\end{proof}

Let $\text{div}$ be the classical divergence operator whose action on a vector-valued function $\vec Y$ is given by
$$\text{div}\vec Y=\nabla\cdot \vec Y.$$
As a limiting case $s\uparrow1$ of Theorem \ref{t3.2}, we have the following conclusion.

\begin{proposition}	\label{p3.3}
Let $p\in(1,\infty)$. Then $L^p=\vec{R}\cdot(L^p)^n$ - namely -
		$$
		f\in L^p\Longleftrightarrow\exists\ (f_1,\dots, f_n)\in (L^p)^n\ \text{such that}\ f=\sum_{j=1}^n R_jf_j\ \ \text{in}\ \ L^p.
		$$
		Consequently, for any $Y\in L^p$, there exist $(Y_0,Y_1,\dots, Y_n)\in (L^p)^{1+n}$ such that
$$
\text{div}\big((-\Delta)^{-\frac12}Y_1,\cdots,(-\Delta)^{-\frac12}Y_n\big)=Y=(-\Delta)^\frac12Y_0\ \ \text{in}\ \ L^p.
$$
\end{proposition}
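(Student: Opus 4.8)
The plan is to reduce everything to the identity $\mathrm{id}=-\sum_{j=1}^n R_j^2$ on $L^p$ — which is licit for $1<p<\infty$ because each $R_j$ is bounded on $L^p$ — together with the elementary symbolic relations tying $R_j$, $\partial_{x_j}$ and $(-\Delta)^{\pm 1/2}$ together. For the equality $L^p=\vec R\cdot(L^p)^n$, the inclusion ``$\supseteq$'' is immediate: if $f=\sum_{j=1}^n R_jf_j$ with each $f_j\in L^p$, then each $R_jf_j\in L^p$, hence $f\in L^p$. For ``$\subseteq$'' I would, given $f\in L^p$, simply take $f_j:=-R_jf\in L^p$ and note that $\sum_{j=1}^n R_jf_j=-\sum_{j=1}^n R_j^2f=f$ in $L^p$. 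Together these two inclusions give the asserted equality.

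For the displayed consequence, fix $Y\in L^p$ and set $Y_j:=R_jY\in L^p$ for $j=1,\dots,n$. Since $\partial_{x_j}(-\Delta)^{-1/2}$ has Fourier symbol $2\pi i\xi_j/(2\pi|\xi|)=i\xi_j/|\xi|$, i.e.\ $\partial_{x_j}(-\Delta)^{-1/2}=-R_j$, I would compute
$$
\mathrm{div}\big((-\Delta)^{-\frac12}Y_1,\dots,(-\Delta)^{-\frac12}Y_n\big)=\sum_{j=1}^n\partial_{x_j}(-\Delta)^{-\frac12}Y_j=-\sum_{j=1}^n R_jY_j=-\sum_{j=1}^n R_j^2Y=Y,
$$
the left-hand side being a genuine $L^p$ function because $R_j$ preserves $L^p$ (so the displayed expression should be read as $-\sum_j R_jY_j$, which is unambiguous). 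For the remaining equation I would put $Y_0:=(-\Delta)^{-1/2}Y=I_1Y$, equivalently $Y_0=\sum_{j=1}^n R_j I_1 Y_j$, so that $(-\Delta)^{1/2}Y_0=Y$ holds in $\cs'$; the Lebesgue space in which $Y_0$ actually lives is governed by the mapping property of $I_1$ (and, invariantly, $Y_0$ belongs to the homogeneous Sobolev space $\dot H^{1,p}$). This is precisely the $s\uparrow 1$ shadow of the equivalences (i)--(iii) of Theorem~\ref{t3.2}, under the identifications $[\nabla^1_+]^\ast=(-\Delta)^{1/2}$ and $[\nabla^1_-]^\ast=\mathrm{div}$ up to sign (the latter because $(-\Delta)^{1/2}R_j=-\partial_{x_j}$).

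The substantive content is confined to the first part, which is a one-line computation; the only place where care is genuinely required is in the consequence, namely in making the symbolic calculus with the nonlocal operators $(-\Delta)^{\pm1/2}$ rigorous on $\cs'$ — fixing the normalizations of $R_j$, $I_1$ and $\partial_{x_j}$ consistently so that $\partial_{x_j}(-\Delta)^{-1/2}=-R_j$ — and in pinning down the exact function space to which the primitive $Y_0=(-\Delta)^{-1/2}Y$ belongs, hence the precise sense of the identity $(-\Delta)^{1/2}Y_0=Y$. Once the Riesz-transform normalization and the mapping property of $I_1$ are in hand, the whole statement collapses to $-\sum_j R_j^2=\mathrm{id}$.
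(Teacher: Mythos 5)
Your proof mirrors the paper's: both parts reduce to $\mathrm{id}=-\sum_{j=1}^n R_j^2$ on $L^p$ (licit since each $R_j$ is bounded there for $1<p<\infty$) together with the multiplier calculus for $R_j$, $\partial_{x_j}$, $(-\Delta)^{\pm1/2}$, taking $f_j=-R_jf$ and $Y_0=I_1Y$, $Y_j=\pm R_jY$ (the sign depending on the normalization of $R_j$, which the paper handles somewhat loosely). You correctly flag what the paper's own proof glosses over: the proposition's claim that $Y_0\in L^p$ is not right in general, since $Y_0=I_1Y$ lives in $\dot W^{1,p}$ rather than $L^p$ --- for instance, $Y=1_{B(0,1)}$ gives $I_1Y(x)\sim|x|^{1-n}$ at infinity, which is not in $L^p$ whenever $1<p\le n/(n-1)$, and no polynomial modification of $Y_0$ can repair this --- so $(-\Delta)^{1/2}Y_0=Y$ should be read in $\cs'$ or in the homogeneous Sobolev scale, exactly as you say; the paper's closing density-of-$\cs$ step silently presupposes $Y_0\in L^p$ and does not address this.
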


\begin{proof} Given $1<p<\infty$.
	Thanks to the boundedness of $\vec{R}$ on $L^{p}$ and the identity $$\vec{R}\cdot\vec{R}=-\text{id}\ \ \text{in}\ \  L^p,
	$$ we have that any
$f\in L^p$ enjoys the desired property
	$$
	f_j=-R_jf\in L^p\ \ \&\ \
	f=\sum_{j=1}^\infty R_j f_j\ \ \text{in}\ \ L^p.
	$$
	
As a consequence, for any $Y\in L^p$ we can find a vector-valued function
	$$
	\vec{Y}=(Y_1,\dots, Y_n)\in \big(L^p\big)^n
	$$
	such that
	$$
	 Y=\sum_{j=1}^nR_jY_j=\nabla\cdot\big((-\Delta)^{-\frac12}\vec{Y}\big)=\text{div}\Big((-\Delta)^{-\frac12}Y_1,\cdots,(-\Delta)^{-\frac12}Y_n\Big)
\ \ \text{in}\ \ \cs'.
	$$
	Also, if
	$$Y_0=I_1Y,$$
	then
	$$
	Y=(-\Delta)^\frac12 I_1 Y=(-\Delta)^\frac12 Y_0\ \ \text{in}\ \ \cs'.
	$$
Since $\cs$ is dense in $[L^p]^\ast=L^{\frac{p}{p-1}}$, we deduce that the last two equalities hold in $L^p$. 	
\end{proof}

\begin{remark}	\label{r3.4}
	Whenever $s=p=1$ we define
$$H^{1,1}=\lf\{f\in\cs_s':\, [f]_{H^{1,1}}
=\|(-\Delta)^\frac 12 f\|_{H^1}<\infty\r\}.$$
Just like $\cs_\infty$ is dense in $H^1$, we have also the density of $\cs_\infty$ in $H^{1,1}$ (cf. \cite[Proposition~2.12]{LXI}).
But for functions in $\cs_\infty$ the Fourier transform easily derives
$$
[f]_{H^{1,1}}
=\|(-\Delta)^\frac 12 f\|_{L^1}+\|\nabla f\|_{L^1}.
$$
Thus, $H^{1,1}$ can be equivalently defined to be the space of all locally integrable functions on $\rn$ satisfying
$[f]_{H^{1,1}}<\infty$.
In analogy to {Theorem \ref{t3.2} and Proposition \ref{p3.3}}, we have:
	\begin{itemize}
		\item[\rm (i)] $H^{1,1}=\vec{R}\cdot\big(H^{1,1}\big)^n$ - namely -
		$$
		Z\in H^{1,1}\Longleftrightarrow
		\exists\ (Z_1,\dots, Z_n)\in (H^{1,1})^n\ \text{such that}\ Z=\sum_{j=1}^n R_jZ_j.
		$$
		This is due to the fact that any $Z\in H^{1,1}$ can be written as
		$$
		Z=\sum_{j=1}^n R_jZ_j\ \ \text{where}\ \ Z_j=-R_jZ\in H^{1,1}.
		$$
		
		\item[\rm (ii)] Given a distribution $T\in\mathcal{S}'$,
		\begin{equation*}
		T\in [H^{1,1}]^\ast\Longleftrightarrow \exists\ (T_0,T_1,\dots, T_n)\in (L^\infty)^{1+n}\ \text{such that}\ T=(-\Delta)^\frac12 T_0-\text{div}(T_1,\dots,T_n)\ \text{in}\ \mathcal{S}'.
		\end{equation*}
		This follows from the endpoint $s=1$ of \cite[Theorem 4.3(i)]{LXI} (cf. \cite[Lemma 4.1]{PT1} for the dual of the endpoint Sobolev space $\dot{W}^{1,1}$) and the basic formula
		$$
		[\nabla^1_+]^\ast=(-\Delta)^\frac12\ \ \&\ \ [\nabla^1_-]^\ast=-\text{div}.
		$$
		
		\item[\rm (iii)] Thanks to (i) and the fact that any  $$\vec{Z}=(Z_1,\dots, Z_n)\in \big(H^{1,1}\big)^n$$
		satisfies
		$$
		 \sum_{j=1}^nR_jZ_j=\nabla\cdot\big((-\Delta)^{-\frac12}\vec{Z}\big)=\text{div}\Big((-\Delta)^{-\frac12}Z_1,\dots, (-\Delta)^{-\frac12}Z_n\Big),
		$$
we get that
$$
		\forall\ Z\in H^{1,1}\
		\exists\ (Z_1,\dots, Z_n)\in (H^{1,1})^n\ \text{such that}\ \text{div}\big((-\Delta)^{-\frac12}Z_1,\dots, (-\Delta)^{-\frac{1}{2}}Z_n\big)=Z.
		$$		
	\end{itemize}
\end{remark}


\section{Distributional solutions of duality equations}\label{s4}

\subsection{Distributional solutions to $[\nabla^s_\pm]^\ast u=\mu$}\label{s4.1}

{}{For any} $\alpha\in(0,n)$ and  nonnegative Radon measure $\mu$ on $\rn$, define
$$
{I_\alpha\mu(x)=c_{n,\alpha}}\int_{\rn}|x-y|^{\alpha-n}\,d\mu(y)\qquad\forall\ \ x\in\rn
$$
and
$$
\||\mu\||_{n-\alpha}=\sup_{(x,r)\in\rn\times(0,\infty)}r^{\az-n}\mu\big(B(x,r)\big).
$$
Observe that
	\begin{equation}\label{e51}
	I_\alpha\mu(x){\ge c_{n,\alpha}\int_{B(0,r)}|x-y|^{\alpha-n}\,d\mu(y)\ge c_{n,\alpha}}\mu\big(B(0,r)\big)\big(|x|+r\big)^{\alpha-n}\ \ \ \forall\ \ (x,r)\in\rn\times(0,\infty).
	\end{equation}
As a straightforward application of Theorem \ref{t3.2}, we can characterize distributional solutions to the following
fractional  duality equations
$$[\nabla^s_+]^\ast u_0=\mu\ \ \&\ \
[\nabla^s_-]^\ast(u_1,\dots, u_n)=\mu.$$

Upon extending \cite[Theorems 3.1-3.2-3.3]{PT} - if $\mu$ is a nonnegative Radon measure on $\mathbb R^{n\ge 2}$ then
$$
\begin{cases}
\exists\
\vec{F}\in{(L^{\frac n{n-1}<p<\infty})^n}\ \ \text{such that}\ \ \text{div}\vec{F}=\mu\Longleftrightarrow
I_1\mu\in L^p\\
\exists\ \vec{F}\in \big(L^{{1\le p}\le\frac{n}{n-1}}\big)^n\ \ \text{such that}\ \ \text{div}\vec{F}=\mu\Longleftrightarrow \mu=0\\
\exists\ \vec{F}\in(L^\infty)^n\ \text{such that}\ \text{div}\vec{F}=\mu\Longleftrightarrow \||\mu\||_{n-1}<\infty,
\end{cases}
$$
we obtain

\begin{theorem}\label{t4.1}
Let $0<s<1<p<\infty$ and $\mu$ be a nonnegative Radon measure on $\mathbb R^{n}$.
Then either
\begin{align}\label{eq-z1}
\exists\ u_0\in L^p \ \text{such that}\ \ [\nabla^s_+]^\ast u_0=\mu \ \ \text{in}\ \ \cs'
\end{align}
or
\begin{align}\label{eq-z2}
\exists\ (u_1,\dots, u_n)\in (L^p)^{n}\ \text{such that}\ [\nabla^s_-]^\ast(u_1,\dots, u_n)=\mu \ \ \text{in}\ \ \cs'
\end{align}
holds if and only if
$$
\begin{cases}
\mu=0 &\quad\textup{if}\ \; p\in(1,\frac n{n-s}]\\
{I_s\mu}\in L^{p} &\quad\textup{if}\ \; p\in(\frac n{n-s},\infty).
\end{cases}
$$

\end{theorem}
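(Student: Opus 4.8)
The plan is to reduce the statement, by means of Theorem~\ref{t3.2}, to a single functional-analytic condition on $\mu$, and then to identify that condition with $I_s\mu\in L^p$. First, apply Theorem~\ref{t3.2} with $p$ replaced by $p'=\frac{p}{p-1}$, which is permissible since $1<p<\infty\iff 1<p'<\infty$ and $(p')'=p$: the equivalence of its items (i)--(iii) then says precisely that \eqref{eq-z1} holds $\iff\mu\in[H^{s,p'}]^\ast\iff$ \eqref{eq-z2} holds. (Alternatively, \eqref{eq-z1}$\iff$\eqref{eq-z2} follows at once from $L^p=\vec R\cdot(L^p)^n$ of Proposition~\ref{p3.3} together with $[\nabla^s_+]^\ast=(-\Delta)^{s/2}$ and $[\nabla^s_-]^\ast=-\text{div}^s=-(-\Delta)^{s/2}\vec R\cdot$.) Hence the ``either $\cdots$ or $\cdots$'' of the statement collapses to the single requirement $\mu\in[H^{s,p'}]^\ast$; moreover we may assume $\mu\in\cs'$, for otherwise $\mu$ has faster-than-polynomial growth, $I_s\mu$ is not even locally integrable, and all three of \eqref{eq-z1}, \eqref{eq-z2}, ``$I_s\mu\in L^p$'' are false. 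It thus remains to prove $\mu\in[H^{s,p'}]^\ast\iff I_s\mu\in L^p$, together with the rigidity that the latter forces $\mu=0$ when $p\le\frac{n}{n-s}$.

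For the equivalence I would argue as follows. Since $\cs_\infty$ is dense in $H^{s,p'}$ (Theorem~\ref{t3.1}), and since $I_sg\in\cs_\infty$ with $(-\Delta)^{s/2}I_sg=g$ for every $g\in\cs_\infty$ (clear on the Fourier side, as $\hat g$ vanishes near the origin), the substitution $\phi=I_sg$ gives
$$\mu\in[H^{s,p'}]^\ast\iff\exists\,C>0:\ \Big|\int_{\rn}(I_sg)\,d\mu\Big|\le C\,\|g\|_{L^{p'}}\quad\forall\,g\in\cs_\infty.$$
Testing membership against a fixed nonnegative bump $\psi\in C_c^\infty$ with $\psi\ge 1$ on the unit ball, rescaled to radius $R$, for which $\|(-\Delta)^{s/2}\psi_R\|_{L^{p'}}\approx R^{\,n/p'-s}=R^{\,n-\frac np-s}$, already yields the growth bound $\mu(B(0,R))\lesssim R^{\,n-\frac np-s}$, hence $\int_{\rn}(1+|y|)^{s-n}\,d\mu(y)<\infty$; consequently $I_s\mu$ is a genuine locally integrable function and, by symmetry of the Riesz kernel together with Fubini, $\int(I_sg)\,d\mu=\int g\,(I_s\mu)\,dx$ for all $g\in\cs_\infty$. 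The displayed inequality is then equivalent to $I_s\mu$ inducing a bounded functional on the dense subspace $\cs_\infty\subseteq L^{p'}$, i.e. to $I_s\mu\in(L^{p'})^\ast=L^p$. The converse implication runs the same chain in reverse: if $I_s\mu\in L^p$ then in particular $I_s\mu\in L^1_{\mathrm{loc}}$, which unwinds to the growth condition $\int_{\rn}(1+|y|)^{s-n}\,d\mu(y)<\infty$, Fubini applies, and $\big|\int(I_sg)\,d\mu\big|=\big|\int g\,(I_s\mu)\,dx\big|\le\|I_s\mu\|_{L^p}\|g\|_{L^{p'}}$ for $g\in\cs_\infty$; one also checks $(-\Delta)^{s/2}(I_s\mu)=\mu$ in $\cs'$, so in fact $u_0=I_s\mu\in L^p$ solves \eqref{eq-z1} explicitly.

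For the rigidity when $p\le\frac{n}{n-s}$, use \eqref{e51}: $I_s\mu(x)\ge c_{n,s}\,\mu(B(0,r))\,(|x|+r)^{s-n}$ for every $r>0$, so if $\mu\neq0$, choosing $r$ with $\mu(B(0,r))>0$ gives $I_s\mu(x)\gtrsim(|x|+r)^{s-n}$; since $(s-n)p+n\ge0$ precisely when $p\le\frac{n}{n-s}$, this lower bound is not $p$-integrable at infinity, whence $I_s\mu\notin L^p$. Therefore in this range $I_s\mu\in L^p\iff\mu=0$, matching the stated dichotomy, while for $p\in(\frac n{n-s},\infty)$ no such simplification occurs and the condition is simply $I_s\mu\in L^p$.

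The step I expect to be the real obstacle is the forward direction of the membership criterion: squeezing the growth bound on $\mu$ out of bare membership in $[H^{s,p'}]^\ast$, and then checking that the abstract $L^p$-representative of the functional $g\mapsto\int g\,(I_s\mu)\,dx$ coincides with $I_s\mu$ itself. A priori the two differ by a distribution annihilating $\cs_\infty$, i.e. by a polynomial, which must be eliminated --- e.g. by observing that both $I_s\mu$ (via the growth bound just obtained) and any $L^p$ function have annular averages over $B(0,2R)\setminus B(0,R)$ tending to $0$ as $R\to\infty$, so the polynomial vanishes. The accompanying bookkeeping with $I_s$, $(-\Delta)^{s/2}$ and the classes $\cs$, $\cs_\infty$, $\cs_s$ is routine.
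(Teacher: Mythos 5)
Your proof is correct and follows essentially the same route as the paper: both reduce to the membership criterion $\mu\in[H^{s,p'}]^\ast$ (Theorem~\ref{t3.2}), identify this with $I_s\mu\in L^p$ via Fubini, and invoke \eqref{e51} to force $\mu=0$ in the range $p\le\frac{n}{n-s}$. You are somewhat more careful than the paper in two places the paper passes over tacitly: you derive the a priori growth bound $\mu(B(0,R))\lesssim R^{\,n-n/p-s}$ by testing against rescaled bumps (which is what licenses the Fubini interchange $\int I_s g\,d\mu=\int g\,I_s\mu\,dx$ over $\cs_\infty$), and you explicitly rule out the polynomial ambiguity that arises because $\cs_\infty$ only separates tempered distributions modulo polynomials.
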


\begin{proof}
Let us start with the case $p\in(1, \frac n{n-s}]$. Clearly, if $\mu=0$, then  $$u_0=u_1=\cdots=u_n=0$$ ensures
$$[\nabla^s_+]^\ast u_0=0$$
and
$$[\nabla^s_-]^\ast(u_1,\dots, u_n)=0.$$
Thus it is enough to show the only-if-part.

Consider first the operator $[\nabla^s_+]^\ast$ and assume that  \eqref{eq-z1} holds for some $u_0\in L^p$. For any $\phi\in\cs_\infty$, we utilize the Fourier transform to derive
\begin{align*}
\phi=(-\Delta)^\frac s2 I_s\phi
\end{align*}
and hence
\begin{align*}
\laz u_0,\phi\raz
&=\laz u_0, (-\Delta)^\frac s2 I_s\phi\raz
=\laz [\nabla^s_+]^\ast u_0,\, I_s\phi\raz
=\int_\rn {I_s\phi(x)}\,d\mu(x)
=\int_\rn \big({I_s\mu(x)} \big)\phi(x)\, dx,
\end{align*}
which, along with the fact that $\cs_\infty$ is dense in $$[L^p]^\ast=L^{\frac p{p-1}},$$ gives
$$
{I_s\mu}=u_0\ \ \text{in}\ \ L^p.
$$
From this and the observation \eqref{e51} it follows that
	 $$
	 \int_{\rn}\lf(\mu\big(B(0,r)\big)(|x|+r)^{(s-n)}\r)^p\,dx<\infty\quad\text{under}\quad (n-s)p\le n.
	 $$
	 However, this is impossible unless $\mu=0$.

Consider next the operator $[\nabla^s_-]^\ast$. Assume that  \eqref{eq-z2} holds - namely - $$\vec{u}=(u_1,\dots, u_n)\in (L^p)^n$$ is a distributional solution of
$$
{[\nabla^s_-]^\ast \vec u=\mu.}
$$
For any $\psi\in\cs_\infty$, by the fact $I_s\psi\in\cs_\infty$, the definition of
$${[\nabla^s_-]^\ast=-\text{div}^s=}-(-\Delta)^\frac s2\vec R$$ and  the self-adjointness of $(-\Delta)^\frac s2$, we obtain
\begin{align*}
\laz I_s\mu,\psi\raz
&=\lf\laz I_s\lf([\nabla^s_-]^\ast \vec u\r), \psi\r\raz\\
&=\laz [\nabla^s_-]^\ast \vec u, I_s\psi\raz\\
&=-\int_\rn \text{div}^s\vec{u}(x)I_s\psi(x)\,dx\\
&=-\sum_{j=1}^n\int_\rn (-\Delta)^\frac s2 R_j u_j(x)I_s\psi(x) \,dx\\
&=-\sum_{j=1}^n\int_\rn  R_j u_j(x)(-\Delta)^\frac s2I_s\psi(x)\,dx\\
&=-\sum_{j=1}^n\int_\rn R_j u_j(x)\psi(x)\,dx\\
&=-\sum_{j=1}^n \laz R_j u_j,\psi\raz,
\end{align*}
which, together with the {aforementioned} density of $\cs_\infty$ in $$[L^p]^\ast=L^{\frac p{p-1}}
$$ and the boundedness of $R_j$ on $L^p$, yields
$$I_s\mu=-\sum_{j=1}^n R_j u_j\ \ \text{in}\ \ L^p.$$
Similarly to the argument for the operator $[\nabla^s_+]^\ast$, the fact $I_s\mu\in L^p$ and \eqref{e51} again derive $\mu=0$.

Next, we handle the case $p\in(\frac n{n-s},\infty)$.  Clearly, the only-if-part follows from the same argument as the case $p\in(1,\frac{n}{n-s}]$.
So, it remains to verify the if-part under
$$
I_s\mu\in L^p\ \ \text{for}\ \ (n-s)p>n.
$$
According to Theorem \ref{t3.2}, we only need to validate that  such a measure $\mu$ induces a bounded linear functional on $H^{s, p'}_+$, where $p'=\frac p{p-1}$.
To this end, for any $\phi\in\cs$, by the fact
$$\phi=I_s(-\Delta)^\frac s2 \phi$$and the Fubini theorem, we write
\begin{align*}
\int_\rn\phi\,d\mu
=\int_\rn I_s(-\Delta)^\frac s2 \phi(x)\, d\mu(x)
=\int_\rn (-\Delta)^\frac s2 \phi(x) I_s\mu(x)\,dx,
\end{align*}
so the H\"older inequality gives
\begin{align*}
\lf|\int_\rn\phi\,d\mu\r|\le \|I_s\mu\|_{L^p}\|(-\Delta)^\frac s2 \phi\|_{L^{p'}}
=\|I_s\mu\|_{L^p} [\phi]_{H^{s, p'}_+}.
\end{align*}
Combining this with the density of $\cs$ in $H^{s, p'}_+$ (cf. Theorem \ref{t3.1}) leads to that
$\mu$ can be extended to a bounded linear functional on $H^{s, p'}_+$.

\end{proof}


\subsection{Morrey's regularity for distributional solutions of $[\nabla^s_\pm]^\ast u=f$}\label{s4.2}
In accordance with
the basic identity
$$
[\nabla^s_-]^\ast(\nabla^s_- u)=-[\nabla^{2s}_+]^\ast u\ \ \ \forall\ \ \ u\in C_c^\infty
$$
and
\cite[Theorem 1.1]{SSS} - if $\Omega$ is an open subset of $\rn$,
$$
(p,s)\in (2-n^{-1},\infty)\times (0,1],
$$
and $u\in H^{s,p}$ is a distributional solution to the following fractional $p$-Laplace equation with a natural variation structure
$$
\text{div}^s(|\nabla^s_-u|^{p-2}\nabla^s_-u)=0\ \ \text{in}\ \ \Omega,
$$
i.e.,
$$
\int_{\rn}|\nabla^s_-u|^{p-2}\nabla^s_- u\cdot\nabla^s_-\phi\,dx=0\ \ \forall\ \ \phi\in C^\infty_c(\Omega),
$$
then $u\in C^{s+\alpha}_{\loc}(\Omega)$ for some positive constant $\alpha$ depending on $p$ only, we are led
to settle Morrey's regularity for the distributional solutions of the fractional duality equations
$$
[\nabla^s_\pm]^\ast u=f.
$$

For any $(p,\kappa)\in [1,\infty)\times(0,n]$,
the Morrey space $\mathrm{L}^{p,\kz}$ was introduced by Morrey \cite{Morrey} and used to study
the solution of some quasi-linear elliptic partial differential equations,
where $\mathrm{L}^{p,\kz}$ comprises
all  Lebesgue measurable functions $f$ on $\mathbb R^n$ with
 $$
 \|f\|_{\mathrm{L}^{p,\kappa}}=\sup_{(x,r)\in\mathbb R^n\times(0,\infty)}\left(r^{\kappa-n}\int_{B(x,r)}|f(y)|^p\,dy\right)^\frac1p<\infty.
 $$
In particular, when $(p,\kz)\in [1,\infty)\times\{n\}$, the space $\mathrm{L}^{p,n}$
is just the classical Lebesgue space $L^p$.

For $(p,\kappa)\in (1,\infty)\times(0,n)$,
let $\mathrm{H}^{p,\kz}$ be the space of
all Lebesgue measurable functions
$f$ on $\rn$ such that
$$
\|f\|_{\mathrm{H}^{p,\kz}}=\inf_\omega\lf(\int_\rn|f(x)|^{p}
\big(\omega(x)\big)^{1-p}\,dx\r)^\frac1p<\fz,
$$
where the infimum is taken over all nonnegative functions
$\omega$ on $\rn$ satisfying
$$
\|\omega\|_{L^1(\Lambda^{n-\kappa}_{(\infty)})}= \int_0^\fz\Lambda^{n-\kappa}_{(\infty)}
\lf(\lf\{x\in\rn:\ \omega(x)>t\r\}\r)\,dt\leq1.
$$
Here and hereafter, for any given $\az\in(0,n)$, the symbol $\Lambda^{\az}_{(\infty)}(E)$ denotes
the $\az$-th order Hausdorff capacity
of a subset $E\subseteq\rn$, given by
$$\Lambda^\az_{(\fz)}(E)=\inf\lf\{\sum_j r_j^\az:\ E\subseteq\bigcup_j B(x_j,r_j)
\ {\rm{with}}\ x_j\in\rn\ {\rm{and}}\ r_j\in(0,\fz)\r\}.$$
According to  \cite{ax04}, we have {the duality}
\begin{equation*}
\label{dual}
[\mathrm{H}^{p',\kz}]^\ast={}{\mathrm{L}^{p,\kz}}.
\end{equation*}
From \cite[(5.1)]{PS} and {\cite[Corollary \& Proposition~5]{Anote}}, we have that if $$\||\mu\||_{n-\kappa}<\infty$$
then
\begin{equation}
\label{eSWH0}
\int_{\rn}|I_\kappa u|\,d\mu\lesssim \|I_\kappa u\|_{L^1(\Lambda^{n-\kappa}_{(\infty)})}\lesssim \|u\|_{H^1}\ \ \forall\ \ u\in H^1.
\end{equation}
Consequently, if
$$
d\nu_\kappa(x)=|x|^{-\kappa}\,dx
$$
then
$$
\||\nu_\kappa\||_{n-\kappa}<\infty
$$
and hence \eqref{eSWH0} is used to produce the Stein-Weiss-Hardy inequality at the endpoint $p=1$:
\begin{equation}
\label{eSWH}
\int_{\rn}|x|^{-\kappa}|I_\kappa u(x)|\,dx\lesssim \|u\|_{H^1}\ \ \forall\ \ u\in H^1.
\end{equation}
This, along with (cf. \cite[(1.3)-(1.4)]{LXI})
\begin{align*}\label{e2a}
[u]_{H^{s,1}}\ls [u]_{{W}^{s,1}}=\int_{\mathbb R^n}\int_{\mathbb R^n}\frac{|u(x)-u(y)|}{|x-y|^{n+s}}\,dy\,dx\ \ \forall\ \ u\in{\cs},
\end{align*}
derives
\begin{equation}
\label{e1H}
\int_{\rn}|x|^{-s}|u(x)|\,dx\lesssim [u]_{H^{s,1}}\lesssim [u]_{{W}^{s,1}}\ \ \forall\ \ u\in\cs,
\end{equation}
which may be viewed as an improvement of the case $p=1$ of \cite[Theorem 1.1]{FrS}.

Upon taking a function $\varphi$ satisfying
\begin{align*}
\begin{cases}
0\le\varphi\in\cs\\
\int_\rn \varphi(x)\, dx=1\\
\varphi_t(x)=t^{-n}\varphi(t^{-1}x)\ \ \forall\ \ (t,x)\in(0,\infty)\times\rn,
\end{cases}
\end{align*}
we extend the real Hardy space $H^p$ from $p\in [1,\infty)$ to $p\in (0,\infty)$ via defining (cf. \cite{St2})
$$
\mathrm{H}^p=\lf\{
f\in\cs':\  \|f\|_{\mathrm{H}^p}=\Big\|\sup_{t\in(0,\infty)} |\varphi_t\ast f|\Big\|_{L^p}<\infty
\r\}\ \ \text{under}\ \  0<p<\infty.
$$
Then {(cf. \cite{FS, St2})}
$$
[\mathrm{H}^p]^\ast
=\begin{cases}
\mathrm{BMO} \ \ &\text{as}\ \ p=1\\
\mathrm{Lip}_{n(p^{-1}-1)}\ \ &\text{as}\ \ p\in\big(\frac n{n-1}, 1\big).
\end{cases}
$$
Here and henceforth, $\mathrm{Lip}_{0<\alpha<1}$ is the $(0,1)\ni\alpha$-Lipschitz space of all functions $f$ on $\rn$ satisfying
$$
\|f\|_{\mathrm{Lip}_{\az}}=\sup_{x,y\in\rn,\, x\neq y}\frac{|f(x)-f(y)|}{|x-y|^{\az}}<\infty.
$$

\begin{theorem}\label{t4.3}
Let
\begin{align*}
\begin{cases}
0<s<1<n\\
0<\kz\le n\\
1\le p<\frac{\kappa}{s}\\
1<q
<\frac n{\frac\kz p-s}\\
1<\begin{cases}
q\le \frac{\kz }{\frac\kz p-s} \ \ &\textup{as}\ 1<p<\frac \kappa s\\
q<\frac{n}{n-s}\ \ &\textup{as}\ 1=p<\frac \kappa s.
\end{cases}
\end{cases}
\end{align*}
If $f\in \mathrm{L}^{p,\kz}$, then
$$
\exists\ {(F_0, F_1,\dots, F_n)}\in
\begin{cases}
\big(\mathrm{Lip}_{s-\frac\kz p}\big)^{1+n} \quad&\textup{as}\; f\in \mathrm{L}^{p>\frac{\kz}s,\kz}\;\\
\big(\mathrm{BMO}\big)^{1+n}\quad&\textup{as}\; f\in \mathrm{L}^{p=\frac{\kz}{s},\kappa}\;\\
\big(\mathrm{L}^{q,\,q(\frac \kz p-s)}\big)^{1+n}\quad&\textup{as}\; f\in \mathrm{L}^{p<\frac{\kz}{s},\kappa}\;
\end{cases}
$$
such that
$$
[\nabla^s_+]^\ast F_0 =f=[\nabla^s_-]^\ast(F_1,...,F_n)
$$
holds in the sense of
$$
\int_\rn \Big([\nabla^s_+]^\ast F_0-f\Big)(x)\phi(x)\, dx=0=
\int_\rn \Big([\nabla^s_-]^\ast \big(F_1,...,F_n\big)-f\Big)(x)\phi(x)\, dx\ \ \forall\;\phi\in \cs.
$$
\end{theorem}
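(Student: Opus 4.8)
The plan is to exhibit an explicit solution built from the Riesz potential $I_s=(-\Delta)^{-s/2}$. Since $[\nabla^s_+]^\ast=(-\Delta)^{s/2}$, $[\nabla^s_-]^\ast=-\mathrm{div}^s=-(-\Delta)^{s/2}\vec R\cdot$, and $-\sum_{j=1}^n R_j^2=\mathrm{id}$, the natural candidates are
$$
F_0=I_sf\qquad\text{and}\qquad F_j=R_jI_sf\quad(j=1,\dots,n),
$$
because then $(-\Delta)^{s/2}F_0=(-\Delta)^{s/2}I_sf=f$ and $-(-\Delta)^{s/2}\sum_{j=1}^n R_jF_j=-(-\Delta)^{s/2}\big(\sum_{j=1}^n R_j^2\big)I_sf=(-\Delta)^{s/2}I_sf=f$. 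First I would check that $F_0$, and hence every $F_j$, is an honest locally integrable function: the hypothesis $p<\kappa/s\le n/s$ forces $sp<n$, so the singularity of $|x-y|^{s-n}$ is integrable against $f\in L^p_{\mathrm{loc}}$ near any point, while the tail is controlled by the dyadic–annulus bound $\int_\rn|f(y)|(1+|y|)^{s-n}\,dy\lesssim\|f\|_{\mathrm{L}^{p,\kappa}}$, which converges exactly because $s<\kappa/p$. The same estimate shows that for every $\phi\in\cs$ the double integral $\int_\rn\int_\rn|f(y)|\,|x-y|^{s-n}\,|(-\Delta)^{s/2}\phi(x)|\,dx\,dy$ is finite, since $(-\Delta)^{s/2}\phi\in\cs_s$ decays like $|x|^{-n-s}$; this justifies the Fubini manipulations used at the end.

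The core of the argument is to place $I_sf$ in the claimed target space, which is the Adams-type boundedness of the Riesz potential on Morrey spaces (cf. \cite{ADuke}). In the subcritical case $sp<\kappa$ I would deduce it from Hedberg's pointwise splitting $|I_sf(x)|\lesssim r^{s}Mf(x)+r^{s-\kappa/p}\|f\|_{\mathrm{L}^{p,\kappa}}$, optimized in $r>0$, together with the boundedness of the Hardy--Littlewood maximal operator $M$ on $\mathrm{L}^{p,\kappa}$ for $p>1$; this gives $I_s\colon\mathrm{L}^{p,\kappa}\to\mathrm{L}^{\kappa/(\kappa/p-s),\,\kappa}$, whose scaling exponent equals $\kappa/p-s$, so the elementary H\"older inclusion $\mathrm{L}^{q_2,\nu_2}\subseteq\mathrm{L}^{q_1,\nu_1}$ (valid whenever $q_1\le q_2$ and $\nu_1/q_1=\nu_2/q_2$) upgrades it to $I_s\colon\mathrm{L}^{p,\kappa}\to\mathrm{L}^{q,\,q(\kappa/p-s)}$ for every $q$ in the stated range. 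The critical case $sp=\kappa$ (target $\mathrm{BMO}$) and the supercritical case $sp>\kappa$ (target $\mathrm{Lip}_{s-\kappa/p}$, with exponent $0<s-\kappa/p<1$ since $\kappa/p<s<1$) instead follow from the classical direct estimates on, respectively, the mean oscillation and the increments of $I_sf$ over a ball, splitting the kernel into a near and a far piece and using the Morrey bound on each. The endpoint $p=1$ is the delicate point: there the Adams estimate is only of weak type at the critical exponent $\kappa/(\kappa-s)$, so one passes to the strictly subcritical range, which is exactly what the constraints $q<\kappa/(\kappa-s)$ and $q<n/(n-s)$ (the latter being the sharp threshold for $|x|^{s-n}$ to be locally $L^q$) encode, and recovers a strong-type bound either by a Hedberg-plus-weak-$(1,1)$-maximal argument followed by a weak-to-strong Morrey inclusion, or via the Hardy-space and Hausdorff-capacity inequalities \eqref{eSWH0}--\eqref{eSWH} collected in \S\ref{s4}. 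Finally, each Riesz transform $R_j$ is a Calder\'on--Zygmund operator, hence bounded on $\mathrm{L}^{q,\nu}$ ($1<q<\infty$), on $\mathrm{BMO}$, and on $\mathrm{Lip}_\alpha$ ($0<\alpha<1$), modulo constants where appropriate — harmless, because $R_j(-\Delta)^{s/2}$ annihilates constants — so every $F_j=R_jI_sf$ lies in the same space as $F_0$.

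It then remains to verify the two identities in the tested sense. For $\phi\in\cs$, using $\nabla^s_+\phi=(-\Delta)^{s/2}\phi$, the Fubini step justified above, and the Fourier-analytic identity $I_s(-\Delta)^{s/2}\phi=\phi$ already exploited in the proofs of Theorems \ref{t3.1} and \ref{t4.1}, one obtains
$$
\int_\rn\big([\nabla^s_+]^\ast F_0\big)(x)\,\phi(x)\,dx=\int_\rn(I_sf)(x)\,(-\Delta)^{s/2}\phi(x)\,dx=\int_\rn f(x)\,I_s(-\Delta)^{s/2}\phi(x)\,dx=\int_\rn f(x)\,\phi(x)\,dx.
$$
For $\nabla^s_-$, writing $\nabla^s_j\phi=R_j(-\Delta)^{s/2}\phi$ and arguing exactly as in the proof of Theorem \ref{t3.2}, (iii)$\Rightarrow$(i), the two sign changes coming from $R_j^\ast=-R_j$ and from $-\sum_j R_j^2=\mathrm{id}$ cancel and yield
$$
\int_\rn\big([\nabla^s_-]^\ast(F_1,\dots,F_n)\big)(x)\,\phi(x)\,dx=\sum_{j=1}^n\int_\rn R_jI_sf(x)\,R_j(-\Delta)^{s/2}\phi(x)\,dx=\int_\rn(I_sf)(x)\,(-\Delta)^{s/2}\phi(x)\,dx=\int_\rn f(x)\,\phi(x)\,dx,
$$
which is the assertion. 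The main obstacle is the Morrey-regularity input of the second paragraph — in particular the critical case $sp=\kappa$ landing in $\mathrm{BMO}$ and, above all, the $p=1$ endpoint, where a strong-type Morrey bound for $I_s$ is available only in the strictly subcritical range singled out by the hypotheses on $q$; once the candidates $F_0=I_sf$ and $F_j=R_jI_sf$ are on the table, the rest (well-definedness, boundedness of $R_j$, and the distributional bookkeeping) is routine.
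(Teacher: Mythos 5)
For the $\nabla^s_+$ half your approach coincides with the paper's: both take $F_0=I_sf$ and invoke Morrey regularity of the Riesz potential. For the $\nabla^s_-$ half, however, you take a genuinely different route. The paper does \emph{not} exhibit $F_1,\dots,F_n$ explicitly; instead it sets up the operator $A\colon u\mapsto\nabla^s_-u$ from the space $X=\{u\in\cs_s':\,\nabla^s_ju\in \mathrm{H}^{\text{predual}}\}$ into a product of Hardy--Morrey spaces $Y$, shows $A$ is injective with closed range, applies the closed range theorem to get surjectivity of $A^\ast$, checks that $f\in\mathrm{L}^{p,\kappa}$ lies in $X^\ast$ using the factorization $\phi=I_s(\sum_jR_j\nabla^s_j\phi)$ and the $I_s$--capacity/Hardy--Morrey estimates, and finally produces $\vec F\in Y^\ast$ by Hahn--Banach. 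You bypass all of this by observing $-\sum_jR_j^2=\mathrm{id}$ and taking $F_j=R_jI_sf$, and then verifying regularity and the distributional identity directly. The trade-off: the paper's abstract route never needs to know that the Riesz transforms preserve the target spaces (it only uses Hardy--Morrey boundedness of $R_j$ on the predual side), whereas yours requires the classical facts that $R_j$ is bounded on $\mathrm{L}^{q,\nu}$ for $1<q<\infty$, on $\mathrm{BMO}$, and on $\mathrm{Lip}_\alpha$ modulo constants. In return you obtain an explicit $\vec F$, which the Hahn--Banach argument cannot give, and you avoid the $\cs\cap X$-density and closed-range bookkeeping. Both approaches ultimately outsource the same nontrivial input, namely Adams-type Morrey regularity of $I_s$ with the delicate endpoint $p=1$; the paper cites its companion work for this, while you sketch a Hedberg-plus-Kolmogorov argument, which does indeed close (the exponent $\sigma=q(\kappa-s)/\kappa<1$ exactly under the stated $q<n/(n-s)\le\kappa/(\kappa-s)$, and the two-piece maximal-function estimate over a ball works out). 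One point you should make explicit for completeness: your absolute-convergence argument for $I_sf$ uses $s<\kappa/p$ and hence only covers the Morrey-target case; in the $\mathrm{BMO}$ and $\mathrm{Lip}$ regimes $sp\ge\kappa$ (vacuous under the hypothesis $p<\kappa/s$ as literally stated, but treated in the paper's proof) the kernel $|x-y|^{s-n}$ is not integrable at infinity against $f$, so $I_sf$ must be understood as the modified Riesz potential $\int(|x-y|^{s-n}-|{-y}|^{s-n})f(y)\,dy$; this only changes $F_0$ and $F_j$ by constants, which $(-\Delta)^{s/2}$ and $(-\Delta)^{s/2}R_j$ annihilate, so the identities are unaffected --- but it is worth saying.
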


\begin{proof} Suppose $f\in \mathrm{L}^{p,\kz}$. Note that
the desired regularity for
$$
[\nabla_+^s]^\ast F_0=(-\Delta)^\frac{s}{2}F_0=f\ \ \text{in}\ \ \mathcal{S}'
$$
follows from \cite[Theorem 1.2]{LXaSNS} with
$F_0=I_s f$. So, it remains to check the desired regularity for
$$
[\nabla_-^s]^\ast (F_1,...,F_n)=f\ \ \text{in}\ \ \mathcal{S}'.
$$

To this end, we define the measure
$\mu_f$ by $$d\mu_f(x)=|f(x)|\, dx.$$
Then, for any $(x,r)\in\rn\times (0,\infty)$, we utilize the H\"older inequality to derive
$$
\mu_f(B(x,r))=\int_{B(x,r)} |f(y)|\, dy
\le \lf( \int_{B(x,r)} |f(y)|^p\, dy\r)^\frac 1p |B(x,r)|^\frac{p-1}{p}
\ls \|f\|_{\mathrm{L}^{p,\kz}} r^{n-\frac\kz p},
$$
thereby achieving
$$
\||\mu_f\||_{n-\frac\kz p}\ls \|f\|_{\mathrm{L}^{p,\kz}}<\infty.
$$

The forthcoming demonstration consists of essentially two components.
	
{\it Part 1 - the case $sp\ge\kz$}.

Under this condition we have
$$
\lf[\mathrm{H}^{\frac n{n+s-\frac\kz p}}\r]^\ast=\begin{cases}
\BMO\ \ \text{as}\ \ sp=\kz\\
\mathrm{Lip}_{s-\frac\kz p}\ \ \text{as}\ \ sp>\kz.
\end{cases}
$$
We are inspired by the proof of \cite[Proposition~1, pp.\,399-400]{BB} (cf. \cite[Theorem~3.2]{PT}) to set
$$Y= (\mathrm{H}^{\frac n{n+s-\frac\kz p}})^n=\overbrace{\mathrm{H}^{\frac n{n+s-\frac\kz p}}\times \cdots\times \mathrm{H}^{\frac n{n+s-\frac\kz p}}}^n$$
and
$$
X=\lf\{u\in \cs_s':\, \nabla^s_ju\in \mathrm{H}^{\frac n{n+s-\frac\kz p}}\;\textup{for}\; j=1,2,\dots,n\r\},
$$
endowed with the norm
$$\|u\|_{X}=\sum_{j=1}^n\|\nabla^s_ju\|_{\mathrm{H}^{\frac n{n+s-\frac\kz p}}}.$$
Note that $\|u\|_{X}=0$ if and only if $u$ is a constant function on $\rn$. So, $X$ is treated as a quotient space  modulo the space of  constant functions. Since $\cs_\infty\subset\cs\cap X\subset X$ and $\cs_\infty$ is dense in the Hardy space $\mathrm{H}^{\frac n{n+s-\frac\kz p}}$ (cf. \cite{Anote}), one easily deduces the density of $\cs\cap X$ in $X$.

Consider the operator
\begin{align*}
A:\  X\to Y \ \  \text{via}\ \ u\mapsto A(u)={\nabla^s_- u}.
\end{align*}
This operator is well defined in that the action of the operator $\nabla^s_-$ can be defined on the distribution space $\cs_s'$.
Moreover, it is easy to see that $A$ is a bounded  linear operator.

We can also show that the operator $A$ is injective. To this end, assuming  that $u\in X$ satisfies
	$$\nabla^s_- u=0\ \ \text{in}\ \ (\mathrm{H}^{\frac n{n+s-\frac\kz p}})^n,
	$$
	we are required to show
	$$
	u=\text{constant}\ \ \Longrightarrow\ \ u= 0\ \ \text{in}\ \  X.
	$$
Note that
$$u\in X\Longrightarrow u\in\cs_s'\ \ \&\ \ \nabla^s_ju\in \mathrm{H}^{\frac n{n+s-\frac\kz p}}.
$$
Thus, for any $\psi\in\cs_\infty$, we use the Fourier transform to derive
$$ \psi=-\sum_{j=1}^n \nabla^s_j I_s R_j\psi\ \ \text{with}\ \ I_s R_j\psi\in\cs_\infty\subseteq \mathrm{Lip}_{s-\frac\kz p},$$
thereby finding
\begin{align*}
\lf|\laz u, \psi\raz\r|
=\lf|\sum_{j=1}^n \laz u,  \nabla^s_j I_s R_j\psi\raz\r|
=\lf|\sum_{j=1}^n \laz \nabla^s_j  u,  I_s R_j\psi\raz\r|
\le \sum_{j=1}^n \|\nabla^s_j  u\|_{\mathrm{H}^{\frac n{n+s-\frac\kz p}}} \|I_s R_j\psi\|_{\mathrm{Lip}_{s-\frac\kz p}}
=0.
\end{align*}
This shows
$$u=0\ \ \text{in}\ \ \cs'/\mathcal P.
$$
In other words, $u$ is a polynomial on $\rn$.
However, if a polynomial $u$ is a bounded linear functional on $\cs_s$,
then $u$ must be a constant function, as desired.

The above analysis shows that the operator $A$ is injective and has a continuous inverse from $A(X)\subseteq Y$ to $X$.
Upon applying the closed range theorem (see \cite[p.\,208, Corollary~1]{Y}), we deduce that the adjoint operator
$$ A^\ast:\, [A(X)]^\ast\to X^\ast\ \ \text{
		via}\ \
	\laz { A}^\ast \vec F, u\raz=\laz \vec F, {A} u\raz\ \ \forall\ \  (\vec{F},u)\in [A(X)]^\ast\times X.$$
is surjective.

Next, we validate that any $f\in \mathrm{L}^{p,\kz}$ belongs to $X^\ast$.
Indeed, for any $\phi\in \cs\cap X$, we apply \cite[Theorem~1.12]{SS1} to write
\begin{align}\label{ss1-t1.12}
\phi=I_s\lf(\sum_{j=1}^n R_j \nabla^s_j\phi\r).
\end{align}
Also, using $\phi\in\cs$, we derive from \cite[Lemma~2.6]{LXI} that
$\nabla^s_j\phi\in\cs_s$,
which easily implies that
$R_j \nabla^s_j\phi$ is continuous on $\rn$.
From the fact
$$\frac\kz p \le s<1\le n-1
$$
it follows that
\begin{align}\label{eq-z3}
\frac{n}{n+s-\frac\kz p}<\frac ns\ \ \&\ \
n-\frac{sn}{n+s-\frac\kz p}\le n-\frac\kz p,
\end{align}
while the second inequality of \eqref{eq-z3} holds because after a change of variable $$0<t=\frac\kz p\le s$$ the function
$$\psi(t)=t(n+s-t)-sn$$
is strictly increasing on the interval $(0,s]$ and $\psi(s)=0$.
By \eqref{eq-z3}, \cite[Theorem~1.1]{LX-jfa} and its remark, we can derive the continuity of the mapping
$$
I_s:\, \mathrm{H}^{\frac n{n+s-\frac\kz p}}\cap\{\text{all continuous functions}\}\to L_{\mu_f}^{1},
$$
with operator norm at most a constant multiple of $\||\mu_f\||_{n-\frac \kz p}$.
Combining these and
boundedness of $R_j$ on $\mathrm{H}^{\frac n{n+s-\frac\kz p}}$ yields
\begin{align*}
\lf|\int_\rn \phi(x) f(x)\, dx\r|
&= \lf| \sum_{j=1}^{n}\int_\rn I_s \lf( R_j \nabla^s_j\phi\r)(x) f(x)\,dx\r|\\
&\le \sum_{j=1}^{n} \int_\rn \lf|I_s \lf( R_j \nabla^s_j\phi\r)(x)\r|\,d\mu_f(x)\\
&\ls \sum_{j=1}^{n} \||\mu_f\||_{n-\frac \kz p} \|R_j \nabla^s_j\phi\|_{\mathrm{H}^{\frac n{n+s-\frac\kz p}}}\\
&\ls \|f\|_{\mathrm{L}^{p,\kz}}\sum_{j=1}^{n}  \|\nabla^s_j\phi\|_{\mathrm{H}^{\frac n{n+s-\frac\kz p}}}\\
&\approx \|\phi\|_X \|f\|_{\mathrm{L}^{p,\kz}}.
\end{align*}
Due to the density of $\cs\cap X$ in $X$,  we arrive at the conclusion that
 $f$ induces a bounded linear functional on $X$.

To continue, like proving Theorem \ref{t3.2}(iii) we use the  surjective property  of $A^\ast$ and the Hahn-Banach extension theorem to obtain
$$\vec F=(F_1,\dots, F_n)\in Y^\ast=(\mathrm{Lip}_{s-\frac\kz p})^n
$$
such that
	\begin{align*}
\laz f,\, \phi\raz=	\laz { A}^\ast \vec F,\, \phi\raz=\laz \vec F, { A} \phi\raz  =  {\laz \vec F,  \nabla^s_- \phi\raz
	=\laz [\nabla^s_-]^\ast \vec F,   \phi\raz}\ \ \forall\ \ \phi\in\cs,
	\end{align*}
	whence
	$$
[\nabla^s_-]^\ast \vec F= { A}^\ast \vec F=f\ \ \text{in}\ \ \cs'.
	$$

{\it Part 2 - the case $sp<\kz$}.

This part is similar to the case $sp\ge\kz$. To be precise, we take
$$Y= \big(\mathrm{H}^{q',q(\frac \kz p-s)}\big)^n.$$
Define
$$
X=\lf\{u\in \cs_s':\, \nabla^s_ju\in \mathrm{H}^{q',q(\frac \kz p-s)}\;\textup{for}\; j=1,2,\dots,n\r\}
$$
endowed with the norm
$$\|u\|_{X}=\sum_{j=1}^n\|\nabla^s_ju\|_{\mathrm{H}^{q',q(\frac \kz p-s)}}.$$
Again, observing that $\|u\|_{X}=0$ if and only if $u$ is a constant, we also understood this $X$ as a quotient space.
Though we do not know if $\cs\cap X$ is dense in  $X$, we use the space $\mathring X$ which is the closure of $\cs\cap X$ in $X$.

Still we consider the operator
\begin{align*}
A:\  \mathring X\to Y \ \  \text{via}\ \ u\mapsto A(u)={\nabla^s_- u},
\end{align*}
and can show that $A$ is injective and has a continuous inverse from $A(\mathring X)$ (the close range of $A$) to $\mathring X$. Consequently, the closed range theorem (cf. \cite[p.\,208, Corollary~1]{Y}) can be applied to derive that the adjoint operator
$$ A^\ast:\, \big[A(\mathring X)\big]^\ast\to (\mathring X)^\ast\ \ \text{
		via}\ \
	\laz { A}^\ast \vec F, u\raz=\laz \vec F, { A} u\raz\ \ \forall\ \  (\vec{F},u)\in \big[A(\mathring X)\big]^\ast\times \mathring  X$$
is surjective.

Next, we validate that any $f\in \mathrm{L}^{p,\kz}$ belongs to $(\mathring X)^\ast$. Applying \cite[Proposition~5.1]{LX00} gives
the continuity of the mapping
$$I_s: \ \mathrm{L}^{p,\kz}\to \mathrm{L}^{q, q(\frac \kz p-s)}.$$
Note that the boundedness of $R_j$ on  ${\mathrm{H}^{q', q(\frac \kz p-s)}}$ is given in \cite[Chapter~8]{Abook}.
So, upon using \eqref{ss1-t1.12} and the Fubini theorem, we derive that any $\phi\in\cs\cap X$ satisfies
\begin{align*}
\lf|\int_\rn \phi(x) f(x)\, dx\r|
&= \lf| \sum_{j=1}^{n}\int_\rn I_s \lf( R_j \nabla^s_j\phi\r)(x) f(x)\,dx\r|\\
&= \lf| \sum_{j=1}^{n}\int_\rn   R_j \nabla^s_j\phi(x) I_sf(x)\,dx\r|\\
&\le \sum_{j=1}^{n}  \|R_j \nabla^s_j\phi\|_{{\mathrm{H}^{q', q(\frac \kz p-s)}}}\|I_sf\|_{\mathrm{L}^{q, q(\frac \kz p-s)}}\\
&\ls \sum_{j=1}^{n}  \|\nabla^s_j\phi\|_{{\mathrm{H}^{q', q(\frac \kz p-s)}}}\|f\|_{\mathrm{L}^{p,\kz}}\\
&\approx \|\phi\|_X \|f\|_{\mathrm{L}^{p,\kz}}.
\end{align*}
This implies that $f$ can be extended to a bounded linear functional on $\mathring X$, that is, $f\in (\mathring X)^\ast$.

Because of $f\in(\mathring X)^\ast$ and the surjective property of $A^\ast$, we can borrow the idea of verifying Theorem \ref{t3.2}(iii) and use the Hahn-Banach extension theorem to find a vector-valued function
$$\vec F=(F_1,\dots, F_n)\in Y^\ast= \big(\mathrm{L}^{q,\,q(\frac \kz p-s)}\big)^n$$
such that
\begin{align*}
	\laz { A}^\ast \vec F,\, \phi\raz=\laz \vec F, { A} \phi\raz  = { \laz \vec F,  \nabla^s_- \phi\raz
	=\laz [\nabla^s_-]^\ast \vec F,   \phi\raz}\ \ \forall\ \ \phi\in\cs,
	\end{align*}
	thereby reaching
	$$
[\nabla^s_-]^\ast \vec F= { A}^\ast \vec F=f\ \ \text{in}\ \ \cs'.
	$$
\end{proof}

\end{document}